\documentclass[11pt]{amsart}

\usepackage[margin=1in]{geometry}
\usepackage{amssymb}
\usepackage[hidelinks]{hyperref}
\usepackage[capitalize]{cleveref}
\usepackage{microtype}
\usepackage{comment}

\usepackage{tikz}


\newcommand{\E}{\mathbb{E}}
\def\F{\mathcal{F}}

\def \F{{\mathbb F}}

\crefname{theorem}{Theorem}{Theorems}

\newtheorem{theorem}{Theorem}[section]

\newtheorem{remark}{Remark}[section]
\newtheorem{corollary}[theorem]{Corollary}
\newtheorem{lemma}[theorem]{Lemma}

\newtheorem*{definition*}{Definition}

\theoremstyle{definition}

\title{Generalized point configurations in ${\mathbb F}_q^d$ }

\author[Bright, Fang, Heritage, Iosevich, Jiang, Parshall, and Sun]{Paige Bright, Xinyu Fang, Barrett Heritage, Alex Iosevich, Tingsong Jiang, Hans Parshall, and Maxwell Sun}

\thanks{This work was supported, in part, by the National Science Foundation Grant NSF DMS 2241623 and NSF DMS 1947438. The fourth listed author is supported in part by the National Science Foundation grant NSF DMS 2154232. The authors also wish to thank Williams College, the University of Michigan, and the University of Rochester for their support.}

\setlength{\parskip}{6pt}

\begin{document}

\begin{abstract}
In this paper, we generalize \cite{IosevichParshall}, \cite{LongPaths} and \cite{cycles} by allowing the \emph{distance} between two points in a finite field vector space to be defined by a general non-degenerate bilinear form or quadratic form. We prove the same bounds on the sizes of large subsets of $\F_q^d$ for them to contain distance graphs with a given maximal vertex degree, under the more general notion of distance. We also prove the same results for embedding paths, trees and cycles in the general setting.
\end{abstract}

\maketitle

\tableofcontents

\section{Introduction}

The purpose of this paper is to study point configuration problems stemming from Erdos type distance problems in vector spaces over finite fields. Throughout this paper, we let $q$ denote an odd prime power. For $d\geq 2$, $\mathbb{F}_q^d$ is the $d$-dimensional vector space over the field $\mathbb{F}_q$ with $q$ elements. Given $x = (x_1,\dots, x_d) \in \mathbb{F}_q^d$, we let 
\[
\lVert x\rVert = x_1^2 + \dots+ x_d^2.
\]

The main question we seek to address in this paper can be described as follows. How large does a subset of $E\subseteq\F_q^d$ need to be such that $E$ contains a specified graph of points in $\F_q^d$ with distances assigned between edges? In particular, we are interested in embedding \textit{distance graphs} into finite field vector spaces. We call a graph $\mathcal G = (V,E)$ a \textit{distance graph} when for each edge $e\in E$, there is some associated nonzero edge length $\lambda_e\in \F_q^\ast$. Then, we call $X$ an \textit{isometric copy} of $\mathcal G$ when there exists a distance preserving bijection $\varphi: V\to X$ where for each $v,w\in V$ with an edge $e$ connecting $v$ and $w$, we have $|\varphi(v) - \varphi(w)|^2 = \lambda_e$.
Recall from \cite{IosevichParshall} that a \emph{distance graph} $\mathcal{G} = (V,E)$ is defined to be a graph on the vertex set $V$ with edge set $E$, such that to each edge $e \in E$ there is some associated nonzero length $\lambda_{e} \in \mathbf{F}_q^*$. 

Let $\phi:\F_q^d\times\F_q^d\to\F_q$ be a non-degenerate bilinear form or $\phi(x,y)=Q(x-y)$ where $Q$ is a non-degenerate quadratic form. We call this a \emph{distance type function} and say that two points $x,y\in\F_q^d$ have \emph{distance} $\lambda$ if $\phi(x,y)=\lambda$.
We call $X \subseteq \mathbf{F}_q^d$ an \emph{isometric copy} of $\mathcal{G}$ when there exists a distance preserving bijection $\iota : V \rightarrow X$ where for every $v,w \in V$ connected by an edge $e \in E$, $\phi(\iota(v),\iota(w))= \lambda_e$.  

The following theorem was proved by Iosevich and Parshall.

\begin{theorem}[\cite{IosevichParshall}]\label{BoundBasedOnMaxDegree} Let $n, t \in \mathbf{N}$, and let $A \subseteq \mathbf{F}_q^d$ with $|A| \geq 12n^2 q^{\frac{d - 1}{2} + t}$.  Suppose $\phi$ is given by 
\[
\phi(x,y) = ||x-y||.
\]
Then $A$ contains an isometric copy with respect to $\phi$ of every distance graph with $n$ vertices and maximum vertex degree $t$. \end{theorem}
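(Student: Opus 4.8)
The plan is to prove the quantitative statement that, under the stated hypothesis on $|A|$, the number $Z(\mathcal{G})$ of injective maps $\iota\colon V\to A$ with $\|\iota(v)-\iota(w)\|=\lambda_{e}$ for every edge $e=\{v,w\}$ satisfies $Z(\mathcal{G})\ge\tfrac12|A|^{n}q^{-|E|}$, which in particular is positive. We argue by induction on $n$; the case $n=1$ is trivial. For the inductive step, pick a vertex $v_{n}$ of $\mathcal{G}$. Since $\mathcal{G}$ has maximum degree at most $t$, the vertex $v_{n}$ has some $s\le t$ neighbours, and the distance graph $\mathcal{G}'$ induced on $V\setminus\{v_{n}\}$ again has maximum degree at most $t$ and exactly $|E|-s$ edges. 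Given a copy $X'=(x^{1},\dots,x^{n-1})$ of $\mathcal{G}'$ in $A$, write $y^{1},\dots,y^{s}\in A$ for the images of the neighbours of $v_{n}$ and $\mu_{1},\dots,\mu_{s}\in\Fq^{*}$ for the corresponding edge lengths. The number of injective extensions of $X'$ to a copy of $\mathcal{G}$ is at least $\#\{x\in A:\|x-y^{j}\|=\mu_{j}\text{ for all }j\}-n$, since at most $n$ admissible $x$ coincide with one of the $x^{i}$ (an $x$ equal to a neighbour $y^{j}$ is automatically excluded, as $\|0\|=0\neq\mu_{j}$). Summing over all copies $X'$ of $\mathcal{G}'$, it therefore suffices to prove that $\sum_{X'}\#\{x\in A:\|x-y^{j}\|=\mu_{j}\ \forall j\}\ge n\,Z(\mathcal{G}')+\tfrac12|A|^{n}q^{-|E|}$.

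The tool for the inner count is the Fourier expansion of spheres. Write $S_{\lambda}=\{z\in\Fq^{d}:\|z\|=\lambda\}$ and fix a nontrivial additive character $\chi$ of $\Fq$. For $\lambda\in\Fq^{*}$ one has $|S_{\lambda}|=q^{d-1}+O(q^{(d-1)/2})$, together with the Gauss--Kloosterman bound $|\widehat{S_{\lambda}}(\xi)|\le 2q^{(d-1)/2}$ for every $\xi\neq 0$. Expanding each indicator $\mathbf 1_{S_{\mu_{j}}}(x-y^{j})$ by Fourier inversion and summing over $x\in A$ yields, for a fixed $X'$, an identity $\#\{x\in A:\|x-y^{j}\|=\mu_{j}\ \forall j\}=\frac{|A|}{q^{s}}\bigl(1+O(sq^{-(d-1)/2})\bigr)+\mathrm{Err}(X')$, where $\mathrm{Err}(X')$ is the sum over frequency tuples $(\xi^{1},\dots,\xi^{s})\neq 0$ of $q^{-ds}\bigl(\prod_{j}\widehat{S_{\mu_{j}}}(\xi^{j})\bigr)\chi\!\bigl(-\sum_{j}y^{j}\!\cdot\xi^{j}\bigr)\widehat{\mathbf 1_{A}}\!\bigl(-\sum_{j}\xi^{j}\bigr)$. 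Summed over the $Z(\mathcal{G}')$ copies $X'$, the main term combines with the inductive lower bound on $Z(\mathcal{G}')$ to produce $(1-o(1))\tfrac12|A|^{n}q^{-|E|}$ (using $|E|=(|E|-s)+s$), while the quantity $n\,Z(\mathcal{G}')$ and the corrections coming from tuples $X'$ with repeated points are $O\!\bigl(n|A|^{n-1}q^{-(|E|-s)}\bigr)$, negligible against the main term once $|A|\gg nq^{s}$; tracking these losses across all $n-1$ stages of the induction is what consumes the quadratic factor $n^{2}$ in the hypothesis. Everything thus reduces to bounding $\sum_{X'}\mathrm{Err}(X')$.

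Bounding $\sum_{X'}\mathrm{Err}(X')$ is the heart of the argument and the step I expect to be the main obstacle. Unwinding the induction, it telescopes into a single sum over nonempty subsets $T\subseteq E$ and over frequency assignments $(\xi_{e})_{e\in T}$ with every $\xi_{e}\neq 0$, of $q^{-d|T|}\bigl(\prod_{e\in T}\widehat{S_{\lambda_{e}}}(\xi_{e})\bigr)$ times a product, over the vertices incident to $T$, of values of $\widehat{\mathbf 1_{A}}$ at the ``net frequency'' at that vertex, times a free factor $|A|$ for each vertex missed by $T$. One bounds each $|\widehat{S_{\lambda_{e}}}(\xi_{e})|$ by $2q^{(d-1)/2}$ and then estimates the remaining frequency sum by combining Plancherel for $\widehat{\mathbf 1_{A}}$ with the incidence structure of the subgraph $(V,T)$ --- the delicate point being to retain cancellation in enough copies of $\widehat{\mathbf 1_{A}}$ rather than invoking the trivial bound $|\widehat{\mathbf 1_{A}}|\le|A|$, which is too wasteful. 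Heuristically, each edge of $T$ contributes a factor $q^{(d-1)/2}$ from the Gauss bound at the price of one factor of $q$ relative to the main term, while the maximum-degree hypothesis controls $|T|$ in terms of the number of incident vertices; arranging the bookkeeping so that every such term is at most $4^{-|E|}$ times the main term is exactly what the threshold $|A|\ge 12n^{2}q^{(d-1)/2+t}$ is calibrated to allow, the $2^{|E|}$ choices of $T$ and the combinatorial constants being absorbed into the polynomial factor in $n$. Once $\sum_{X'}\mathrm{Err}(X')$ is seen to be at most a quarter of the main term, the induction closes and $Z(\mathcal{G})>0$.
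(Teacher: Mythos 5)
There is a genuine gap, and you have located it yourself: the bound on $\sum_{X'}\mathrm{Err}(X')$, which you describe as ``the heart of the argument and the step I expect to be the main obstacle,'' is never actually proved. What you offer in its place is a heuristic: after telescoping, the error is a sum over nonempty subgraphs $T\subseteq E$ and frequency tuples $(\xi_e)_{e\in T}$ with all $\xi_e\neq 0$, of products of sphere Fourier coefficients times values of $\widehat{\mathbf 1_A}$ at the net frequency $\sum_{e\ni v}\xi_e$ at each incident vertex $v$. Bounding this is genuinely hard, not just tedious: the net frequency at a vertex can vanish even though every individual $\xi_e$ is nonzero, so you cannot uniformly gain from $\widehat{\mathbf 1_A}$ at every incident vertex; Plancherel can only be spent on a set of vertices whose frequencies can be taken as independent coordinates of the tuple $(\xi_e)_{e\in T}$, and for a general subgraph $T$ choosing such a set while still summing freely over the remaining frequencies is exactly the combinatorial problem you wave at with ``arranging the bookkeeping.'' You also give no concrete mechanism by which the maximum-degree hypothesis produces the loss of $q^{t}$ rather than, say, $q^{|T|}$ or $q^{m}$; the phrase ``the maximum-degree hypothesis controls $|T|$ in terms of the number of incident vertices'' does not by itself yield the claimed $4^{-|E|}$-per-term bound. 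As written, the proof does not close.

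For comparison, the paper (following \cite{IosevichParshall}) avoids this multi-frequency bookkeeping entirely. It first proves a two-function estimate (\Cref{distancesTheoremCombined}): for nonnegative $f,g$ and $\lambda\neq 0$,
\[
\Big|\E_{x,y} f(x)g(y)d_\lambda(x,y)-\E_x f(x)\,\E_y g(y)\Big|\leq C\,q^{-\frac{d-1}{2}}\|f\|_2\|g\|_2,
\]
whose proof needs Fourier analysis for only a \emph{single} sphere at a time. It then inducts on the number of \emph{edges} rather than vertices: to delete the edge $e_{1,2}$ one sets $f(x_1)=1_A(x_1)\prod_{j\geq 3}d_{1,j}(x_1,x_j)$ and $g_2(x_2)=1_A(x_2)\prod_{j\geq 3}d_{2,j}(x_2,x_j)$, applies the displayed estimate to the inner average over $x_1,x_2$, and crucially converts the $L^2$ norms back to $L^1$ norms via the pointwise bound $f\leq q^{t-1}f^{1/2}\cdot f^{1/2}$, i.e.\ $\|f\|_2^2\leq q^{t-1}\|f\|_1$, since $f$ is a product of at most $t-1$ factors each bounded by $q$. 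This is the precise, one-line mechanism by which the maximum-degree hypothesis enters and produces the factor $q^{t-\frac{d+1}{2}}$ in the recursion $|\mathcal{N}_{\mathcal G}(A)-\alpha^n|\leq 4m\alpha^{n-1}q^{t-\frac{d+1}{2}}$; degenerate (non-injective) embeddings are then removed by a separate, elementary count. If you want to salvage your vertex-removal strategy, you would need either to prove an $s$-sphere-intersection analogue of the functional distance theorem with explicit dependence on $s\leq t$, or to restructure the induction so that only one edge (hence one sphere, one frequency variable) is Fourier-expanded at a time --- which is essentially what the edge-deletion argument does.
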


The main analytic input into Theorem \ref{BoundBasedOnMaxDegree} was the special case of Theorem \ref{distancesTheoremCombined} where $\phi$ is the usual distance function. Our main goal in this paper is to prove this theorem stated for a more general $\phi$, which is done in \Cref{sec:functional}. Once we have that, the generalized version of the above theorem follows immediately by the exact same edge deletion induction argument used in \cite{IosevichParshall}. We state our generalization below. The proof is given in \Cref{sec:main}.

\begin{theorem}\label{MainTheorem} Let $n, t \in \mathbf{N}$, and let $A \subseteq \mathbf{F}_q^d$ with $|A| \geq 12n^2 q^{\frac{d - 1}{2} + t}$. 
Then $A$ contains an isometric copy with respect to any distance type function $\phi$ of every distance graph with $n$ vertices and maximum vertex degree $t$. \end{theorem}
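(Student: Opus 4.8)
The plan is to obtain \Cref{MainTheorem} from \Cref{distancesTheoremCombined} — the counting estimate for an arbitrary distance type function $\phi$ proved in \Cref{sec:functional} — by the same ``edge-deletion'' induction used in \cite{IosevichParshall}. The key point is that in that argument $\phi$ appears only as a black box, through the counting estimate: once the estimate is available for a general $\phi$, the remainder is purely combinatorial and does not see the particular form of $\phi$.

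Concretely, I would induct on $n = |V|$. For $n = 1$ there is nothing to do, since $A \ne \emptyset$ and a one-vertex distance graph has no edges. Now let $\mathcal{G} = (V,E)$ be a distance graph with $|V| = n \ge 2$ and maximum vertex degree $t$, and let $|A| \ge 12 n^2 q^{\frac{d-1}{2}+t}$. Delete a vertex $v$ together with its (at most $t$) incident edges; the result is a distance graph $\mathcal{G}'$ on $n-1$ vertices with maximum degree at most $t$. Since $|A| \ge 12 n^2 q^{\frac{d-1}{2}+t} \ge 12(n-1)^2 q^{\frac{d-1}{2}+t}$, the inductive hypothesis provides an isometric copy of $\mathcal{G}'$ in $A$ with respect to $\phi$; denote the corresponding embedding by $\iota$ on $V \setminus \{v\}$ and put $X' = \iota(V \setminus \{v\})$, a set of $n-1$ points of $A$.

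Let $y_1, \dots, y_m \in X'$ (with $m \le t$) be the images of the neighbours of $v$ in $\mathcal{G}$, and let $\mu_1, \dots, \mu_m \in \mathbf{F}_q^*$ be the corresponding edge lengths. Applying \Cref{distancesTheoremCombined} to the pinned points $y_1, \dots, y_m$ and lengths $\mu_1, \dots, \mu_m$, and using $|A| \ge 12 n^2 q^{\frac{d-1}{2}+t} \ge 12 n^2 q^{\frac{d-1}{2}+m}$, one finds
\[
\#\bigl\{\, x \in A : \phi(x, y_i) = \mu_i \text{ for } i = 1, \dots, m \,\bigr\}\ \ge\ n\ >\ n - 1 = |X'|
\]
(when $m = 0$ this set is all of $A$, which has more than $n-1$ elements in any case). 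Hence we may pick such an $x$ with $x \notin X'$ and extend $\iota$ by $\iota(v) := x$. The resulting map $\iota \colon V \to A$ is injective and realizes the length of every edge of $\mathcal{G}$ — the edges within $\mathcal{G}'$ by the inductive hypothesis, and the edges at $v$ by the choice of $x$ — so $\iota(V)$ is an isometric copy of $\mathcal{G}$ in $A$ with respect to $\phi$, completing the induction.

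All of the analytic difficulty is packaged into \Cref{distancesTheoremCombined}; the induction above is bookkeeping, and I do not anticipate any genuine obstacle in it. The one point that must be respected is that the counting estimate has to hold uniformly over the data fed to it at the inductive step — for every $m \le t$ and every choice of $m$ distinct pinned points of $A$, with no further nondegeneracy restriction — so that it applies no matter how $\mathcal{G}'$ happened to be embedded; this is exactly the generality in which \Cref{distancesTheoremCombined} is formulated. (The factor $n^2$ in the hypothesis is calibrated, via \Cref{distancesTheoremCombined}, precisely so that the displayed count exceeds $|X'|$, and the maximum-degree hypothesis on $\mathcal{G}$ is used only to ensure $m \le t$ at the inductive step, irrespective of which vertex is deleted.)
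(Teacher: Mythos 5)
There is a genuine gap at the extension step. You delete a vertex $v$, fix a particular embedding $\iota$ of $\mathcal{G}'$ with image $X'$, and then claim that \Cref{distancesTheoremCombined} gives a lower bound on
\[
\#\{x \in A : \phi(x,y_i) = \mu_i \text{ for } i = 1,\dots,m\}
\]
for the \emph{specific} pinned points $y_1,\dots,y_m \in X'$. \Cref{distancesTheoremCombined} does not say this: it is an $L^2$-averaged estimate over pairs weighted by two functions $f,g$, and it degenerates when one of the functions is concentrated at a point. Concretely, taking $g = 1_{\{y\}}$ gives $\E_x f(x) d_\lambda(x,y) = \E_x f(x) + O\bigl(q^{1/2}\|f\|_2\bigr)$ after clearing the normalization, and with $f = 1_A$, $|A| = \alpha q^d$, the error $q^{1/2}\alpha^{1/2}$ dominates the main term $\alpha$ unless $\alpha > q$, which is impossible. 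Even for $m=1$ the uniform pinned count is genuinely false at this density: one can only control $|A \cap S_{\mu}(y)|$ on average over the center $y$, not for every $y$, so there may be partial embeddings $X'$ of $\mathcal{G}'$ that admit no extension at all. Your parenthetical remark that the required uniformity over all choices of pinned points ``is exactly the generality in which \Cref{distancesTheoremCombined} is formulated'' is where the argument breaks: that theorem is formulated for general \emph{functions}, not for arbitrary \emph{point masses}, and the two are far apart in $L^2$.

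This is precisely why the paper (following \cite{IosevichParshall}) does not run a greedy vertex-by-vertex induction. Instead it works with the global weighted count $\mathcal{N}_{\mathcal{G}}(A)$ of all ordered (possibly degenerate) embeddings simultaneously, inducts on the number of \emph{edges} by deleting one edge $e_{1,2}$ while keeping the averages over all vertices in place (\Cref{degreeAsymptoticnew}), and applies \Cref{distancesTheoremCombined} to the functions $f(x_1)$ and $g_2(x_2)$ that aggregate over all choices of the remaining vertices --- these have favorable $\|\cdot\|_2^2 \le q^{t-1}\|\cdot\|_1$ bounds precisely because they are sums over many configurations, not indicators of single points. Degenerate embeddings are then removed at the end by a separate counting argument. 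If you want to keep a vertex-deletion flavor, you would need a pinned simultaneous-distance estimate valid for \emph{every} $m$-tuple of centers, which is a strictly stronger (and at this density false) statement; the edge-deletion scheme is not mere bookkeeping but the device that avoids ever having to pin points.
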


We also prove the results for embedding paths, trees and cycles with respect to a distance type function $\phi$ in Sections \ref{sec:path}, \ref{sec:tree} and \ref{sec:cycle} respectively, generalizing those obtained previously in \cite{LongPaths} and \cite{cycles}.

We now set up some notations. 
Let $\chi : \mathbf{F}_q \rightarrow \mathbf{C}$ denote the canonical additive character. 
We find it convenient to use averaging notation for $f : \mathbf{F}_q^d \rightarrow \mathbf{C}$ of
\[
	\E_x f(x) := q^{-d} \sum_{x \in \mathbf{F}_q^d} f(x),
\]
and we will condense multiple averages $\E_{x_1} \E_{x_2} \cdots \E_{x_n}$ as $\E_{x_1, x_2, \ldots, x_n}$.  
Following \cite{IosevichParshall}, we use the normalized $L^2$ norm of $f : \mathbf{F}_q^d \rightarrow \mathbf{C}$ 
\[
	\| f \|_2 := \Big(\E_x |f(x)|^2 \Big)^{1/2}.
\]

\section{Generalized Functional Distance Theorem}
\label{sec:functional}
The following Functional Distance Theorem was previously proven for the standard distance function $||x-y||$ in \cite[Theorem 4]{IosevichParshall} and for the standard inner product $x\cdot y$ in \cite[Theorem 2.1]{covert2008generalized}. In this section, we give the proof for the general version, where we allow the distance type function $\phi$ to be any non-degenerate quadratic form or non-degenerate bilinear form.

Let $\phi:\F_q^d\times\F_q^d\to\F_q$ be a distance type function defined either by a non-degenerate bilinear form or by a non-degenerate quadratic form. More precisely, we have either $\phi(x,y)=B(x,y)$ for some non-degenerate bilinear form $B$ or $\phi(x,y)=Q(x-y)$ for some non-degenerate quadratic form $Q$.
Define
    \[
    d_\lambda(x, y) = \begin{cases}
        q, & \phi(x,y)=\lambda\\
        0, & \text{otherwise}
    \end{cases}.
    \]
Our main distance counting tool is the following.
\begin{theorem}[Generalized Functional Distance Theorem]\label{distancesTheoremCombined} 
For any nonnegative functions $f,g : \mathbf{F}_q^d \rightarrow \mathbf{R}$ and $\lambda \in \mathbf{F}_q^*$,
\[
	\Big|\E_{x,y} f(x)g(y)d_\lambda(x , y) - \E_x f(x) \E_y g(y)\Big| \leq C_\phi q^{-\frac{d-1}{2}} \| f \|_2 \| g \|_2,
\]
where
\[
C_\phi=\begin{cases}
    1, &\text{if $\phi$ is defined by a bilinear form}\\
    2, &\text{if $\phi$ is defined by a quadratic form}
\end{cases}.
\]
\end{theorem}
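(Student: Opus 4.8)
I would expand the indicator defining $d_\lambda$ into additive characters and reduce the whole estimate to one exponential sum in a single variable whose cancellation can be exploited. Since $q$ is odd, $\mathbf{1}[\phi(x,y)=\lambda]=q^{-1}\sum_{s\in\F_q}\chi(s(\phi(x,y)-\lambda))$, so $d_\lambda(x,y)=\sum_{s\in\F_q}\chi(s(\phi(x,y)-\lambda))$ and the $s=0$ term contributes exactly $\E_x f(x)\,\E_y g(y)$. Hence it suffices to bound
\[
\mathcal{E}:=\sum_{s\in\F_q^*}\chi(-s\lambda)\,\E_{x,y}f(x)g(y)\chi(s\phi(x,y)).
\]
Throughout I use the normalized Fourier transform $\widehat{u}(\xi)=\E_x u(x)\chi(-x\cdot\xi)$, for which $\sum_\xi|\widehat{u}(\xi)|^2=\|u\|_2^2$ (Plancherel), $\widehat{u*v}=\widehat{u}\,\widehat{v}$, and $\E_x u(x)v(x)=\sum_\xi\widehat{u}(\xi)\widehat{v}(-\xi)$.

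\emph{The bilinear case ($C_\phi=1$).} Write $\phi(x,y)=B(x,y)=x^\top My$ with $M$ invertible. For fixed $x$ and $s\ne 0$, $\E_y g(y)\chi(s\,x^\top My)=\widehat{g}(-sM^\top x)$, so $\mathcal{E}=\E_x f(x)F(x)$ with $F(x):=\sum_{s\in\F_q^*}\chi(-s\lambda)\,\widehat{g}(-sM^\top x)$ (which is real-valued). By Cauchy--Schwarz, $|\mathcal{E}|\le\|f\|_2\|F\|_2$, so it remains to show $\|F\|_2^2\le(q-1)q^{-d}\|g\|_2^2$. Expanding $\|F\|_2^2=\E_x|F(x)|^2$, substituting $\zeta=M^\top x$ (a bijection), and using the orthogonality relation $\E_\zeta\widehat{g}(-s\zeta)\overline{\widehat{g}(-s'\zeta)}=\E_{y,y'}g(y)g(y')\,\mathbf{1}[sy=s'y']$, one obtains
\[
\|F\|_2^2=\E_{y,y'}g(y)g(y')\,T(y,y'),\qquad T(y,y'):=\sum_{s,s'\in\F_q^*}\chi(-(s-s')\lambda)\,\mathbf{1}[sy=s'y'].
\]
A short case check (is each of $y,y'$ zero; are they parallel; is the scalar ratio equal to $1$) gives $T(0,0)=1$, $T(y,y')=0$ when exactly one of $y,y'$ vanishes or when $y,y'$ are nonzero and non-parallel, and $T(y,\rho y)=q-1$ or $-1$ according as $\rho=1$ or $\rho\ne1$. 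Since $f,g\ge 0$, dropping the nonnegative off-diagonal terms and using $q\ge 3$ yields $\E_{y,y'}g(y)g(y')T(y,y')\le q^{-2d}(q-1)\sum_y g(y)^2=(q-1)q^{-d}\|g\|_2^2$, which proves the theorem with $C_\phi=1$. (Equivalently, $y\mapsto My$ reduces $B$ to the standard inner product and one may cite \cite[Theorem 2.1]{covert2008generalized}.)

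\emph{The quadratic case ($C_\phi=2$).} Here $\phi(x,y)=Q(x-y)$, so with $K_s(z):=\chi(sQ(z))$ we have $\E_{x,y}f(x)g(y)\chi(sQ(x-y))=\E_x f(x)(g*K_s)(x)=\sum_\xi\widehat{f}(\xi)\widehat{g}(-\xi)\widehat{K_s}(-\xi)$. Since $q$ is odd, write $Q(z)=z^\top Az$ with $A$ symmetric and invertible; completing the square gives, for $s\ne 0$,
\[
\widehat{K_s}(-\xi)=q^{-d}\,G(s)\,\chi\!\Big(-\tfrac{(A^{-1}\xi)\cdot\xi}{4s}\Big),\qquad G(s):=\sum_{w\in\F_q^d}\chi(sQ(w)),
\]
and diagonalizing $Q$ together with the one-variable Gauss sum identity shows $G(s)=c_Q\,\eta(s)^d$ for a constant $c_Q$ with $|c_Q|=q^{d/2}$, where $\eta$ is the quadratic character. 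Substituting back,
\[
\mathcal{E}=\frac{c_Q}{q^d}\sum_\xi\widehat{f}(\xi)\widehat{g}(-\xi)\,S(\xi),\qquad S(\xi):=\sum_{s\in\F_q^*}\eta(s)^d\,\chi\!\Big(-s\lambda-\tfrac{(A^{-1}\xi)\cdot\xi}{4s}\Big).
\]
When $(A^{-1}\xi)\cdot\xi\ne 0$ the sum $S(\xi)$ is a Kloosterman sum ($d$ even) or a Salié sum ($d$ odd), so $|S(\xi)|\le 2\sqrt q$ by the Weil-type bound; when $(A^{-1}\xi)\cdot\xi=0$ (in particular $\xi=0$) it equals $-1$ ($d$ even) or a Gauss sum ($d$ odd), so $|S(\xi)|\le\sqrt q$. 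Thus $|S(\xi)|\le 2\sqrt q$ uniformly, and Cauchy--Schwarz with Plancherel give
\[
|\mathcal{E}|\le\frac{|c_Q|}{q^d}\cdot 2\sqrt q\cdot\sum_\xi|\widehat{f}(\xi)|\,|\widehat{g}(-\xi)|\le 2q^{\frac{1-d}{2}}\|f\|_2\|g\|_2,
\]
which is the theorem with $C_\phi=2$.

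\emph{The main obstacle} is keeping the constants sharp. A term-by-term bound on the $s$-sum loses a factor of $\sqrt q$, so in the quadratic case one must recognize $S(\xi)$ as a single Kloosterman/Salié sum and use square-root cancellation \emph{uniformly in $\xi$}; the delicate point is the degenerate directions $(A^{-1}\xi)\cdot\xi=0$, which have to be absorbed into the same estimate rather than split off (peeling off the $\xi=0$ term, say, produces an extra error of the same order and pushes the constant past $2$). In the bilinear case everything is elementary, but the case analysis for $T(y,y')$ must be arranged so that precisely the right nonnegative terms are discarded, landing the constant at $1$ rather than larger.
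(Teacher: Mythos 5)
Your proposal is correct and follows essentially the same route as the paper: the bilinear case reproduces the Cauchy--Schwarz-plus-orthogonality argument of Covert--Hart--Iosevich--Koh--Rudnev (which the paper cites and adapts via its orthogonality lemma), and the quadratic case reproduces the Iosevich--Parshall argument of Fourier inversion, the Gauss-sum evaluation of $\sum_x\chi(sQ(x)+x\cdot\xi)$, and the Kloosterman/Sali\'e bounds. Your treatment is in fact slightly more careful than the paper's in explicitly handling the degenerate directions $(A^{-1}\xi)\cdot\xi=0$ within the uniform bound $|S(\xi)|\le 2\sqrt q$.
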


We prove this result for the bilinear form case and the quadratic form case separately, since the arguments are quite different.

\subsection{Bilinear form case}
In this section, let $\phi:\F_q^d\times\F_q^d\to\F_q$ be a non-degenerate bilinear form. 
The case when $\phi$ is the usual dot product was essentially proven in Theorem 2.1 of \cite{covert2008generalized}. We generalize their argument to an arbitrary non-degenerate bilinear form using the following orthogonality relation.
\begin{lemma}[Orthogonality for a non-degenerate bilinear form]
Then we have 
    \label{lem:orthogonality}
    
    \[
    \sum_{x\in\F_q^d} \chi(\phi(x,y))
    =
    \begin{cases}
        0, & y\neq 0\\
        q^d, & y=0
    \end{cases}.
    \]
    \begin{proof}
        When $y=0$, $\phi(x,y)=0$ for all $x$, so $\chi(\phi(x,y))=1$ for all $x$ and the equality follows. Assume $y\neq 0$. Suppose the bilinear form is given by the symmetric matrix $A=(a_{ij})$ by
        \[
        \phi(x,y) = x^T A y.
        \]
        Then if $x=(x_1,\cdots,x_d), y=(y_1,\cdots,y_d)$, we have
        \[
        \phi(x,y)
        =
        \sum_i x_i \left(\sum_j a_{ij} y_j\right).
        \]
        So
        \[
        \sum_{x\in\F_q^d}\chi(\phi(x,y))
        = \sum_{x_1}\cdots\sum_{x_d} \chi\left(\sum_i x_i \left(\sum_j a_{ij} y_j\right)\right).
        \]
        Since $\phi$ is non-degenerate, there exists an $i$ such that $\sum_j a_{ij} y_j \neq 0$. Without loss of generality, assume $i=d$. Then if we fix $x_1,\cdots,x_{d-1}$ first and sum in $x_d$, we get that the sum vanishes as the input to $\chi$ takes each value in $\F_q$ exactly once. Then summing over all $x_1,\cdots,x_d$  yields the desired result.
    \end{proof}
\end{lemma}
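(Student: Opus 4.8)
The plan is to split into the two cases and reduce the nontrivial one to the vanishing of a complete additive character sum over $\F_q$. The case $y=0$ is immediate: bilinearity gives $\phi(x,0)=0$ for every $x$, so each summand equals $\chi(0)=1$ and the total is simply the number of points, namely $q^d$.

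For $y\neq 0$ I would fix $y$ and regard the map $x\mapsto\phi(x,y)$ as a linear functional on $\F_q^d$. Representing the form by a matrix $A$ with $\phi(x,y)=x^T A y$, this functional sends $x$ to $x\cdot(Ay)$, so its coefficients are exactly the entries $(Ay)_i=\sum_j a_{ij}y_j$. The one place the hypothesis is used is to guarantee this functional is not identically zero: non-degeneracy of $\phi$ is precisely the statement that $A$ is invertible, hence $y\neq 0$ forces $Ay\neq 0$, and so some coordinate $(Ay)_i$ is nonzero.

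Granting that some $(Ay)_i\neq 0$ — after relabeling the coordinates, say $i=d$ — I would evaluate the sum one coordinate at a time, writing it as $\sum_{x_1,\ldots,x_{d-1}}\sum_{x_d}\chi\bigl(\sum_i x_i (Ay)_i\bigr)$. For each fixed choice of $x_1,\ldots,x_{d-1}$ the argument is affine in $x_d$ with nonzero leading coefficient $(Ay)_d$, so as $x_d$ ranges over $\F_q$ the argument ranges over all of $\F_q$ exactly once; the inner sum is therefore $\sum_{t\in\F_q}\chi(t)=0$, the standard fact that a nontrivial additive character sums to zero across the whole field. Summing $q^{d-1}$ copies of zero over the remaining coordinates then yields the claimed value $0$. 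I do not anticipate a genuine obstacle here: the only subtlety is correctly identifying non-degeneracy with the non-vanishing of the functional $x\mapsto\phi(x,y)$, which is the sole hypothesis doing any work. A coordinate-free alternative would instead observe that any nonzero linear functional on $\F_q^d$ is surjective with all fibers of size $q^{d-1}$, giving $\sum_x\chi(\phi(x,y))=q^{d-1}\sum_{t\in\F_q}\chi(t)=0$ directly; I would likely present the coordinate version since it matches the notation already in place.
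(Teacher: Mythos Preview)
Your argument is correct and matches the paper's proof essentially line for line: both handle $y=0$ trivially, then for $y\neq 0$ write $\phi(x,y)=x^T A y$, use non-degeneracy to find a coordinate $i$ with $(Ay)_i\neq 0$, and sum in that coordinate to obtain a complete additive character sum over $\F_q$. The only cosmetic addition is your coordinate-free remark at the end, which the paper omits.
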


Following the proof of Theorem 2.1 in \cite{covert2008generalized} and applying \Cref{lem:orthogonality} right after the application of Cauchy-Schwarz, one shows that
    \begin{equation}
        \left| \sum_{\phi(x,y) = t}f(x)g(y) - q^{-1}\sum_{x}f(x)\sum_{y}g(y) \right| \le q^{\frac{d-1}{2}} \left( \sum_{x}f(x)^2 \right)^{\frac{1}{2}}\left( \sum_{y}g(y)^2 \right)^{\frac{1}{2}}.
        \label{eq:nondegbilin}
    \end{equation}
The statement in \Cref{distancesTheoremCombined} then follows immediately by re-normalizing.

\begin{remark}
Since we do not have the orthogonality relation for a general degenerate bilinear form, we have to restrict to the non-degenerate case.
\end{remark}

\subsection{Quadratic form case}
The classification theorem of non-degenerate quadratic forms on $\F_q^d$ (see, for example,  \cite[IV.1, Prop. 5]{serre-course-prop5}) states that there are only 2 non-degenerate quadratic forms up to isomorphism on $\mathbf{F}_q^d$, namely, 
\[
X_1^2 + \cdots + X_{n-1}^2 + X_n^2
\]
and
\[
X_1^2 + \cdots + X_{n-1}^2 + aX_n^2
\]
where $a\in\F_q^*$ is a non-square. 


In this section, let $\phi(x,y)=Q(x-y)$ for some non-degenerate quadratic form $Q$ on $\F_q^d$.

\begin{proof}[Proof of \Cref{distancesTheoremCombined} in the case of non-degenerate quadratic forms]
    By the classification theorem, we may write
    \[
    Q(X)=X_1^2 + \cdots + X_{n-1}^2 + aX_n^2
    \]
    for some $a\in\F_q^*$.
Following the exact same argument involving orthogonality and Fourier inversion in the proof of Theorem 4 in \cite{IosevichParshall}, we get
	\[
	\E_{x,y}f(x)g(x + y)d_\lambda(y) - \E_x f(x) \E_y g(y) = \sum_{\xi \in \mathbf{F}_q^d} \widehat{f}(\xi)\widehat{g}(-\xi) \sum_{\ell \in \mathbf{F}_q^*} \chi(-\ell \lambda) \E_{y} \chi(\ell Q(y) + y \cdot \xi).
	\]
	By \cite[Theorem 5.33]{lidl1997finite},
	\[
		\E_{y} \chi(\ell Q(y) + y \cdot \xi) = q^{-d} G(\chi,\eta)^d \chi\Big(-\frac{Q'(\xi)}{4\ell}\Big) \eta(\ell)^{d-1}\eta(a\ell)
	\]
    where $Q'(X)=X_1^2+\cdots+X_{d-1}^2+a^{-1}X_d^2$.
    Recall that we have 
    \[
    |G(\chi,\eta)|=q^{1/2},
    \]
    and that $\eta(a\ell)=\eta(\ell)$ if $a$ is a square and $\eta(a\ell)=-\eta(\ell)$ otherwise, for all $\ell\in\F_q^*$. Therefore we obtain
	\[
		\Big|\E_{x,y}f(x)g(x + y)d_\lambda(y) - \E_x f(x) \E_y g(y)\Big| \leq q^{-d/2} \sum_{\xi \in \mathbf{F}_q^d} |\widehat{f}(\xi)||\widehat{g}(-\xi)| \Big|\sum_{\ell \in \mathbf{F}_q^*} \chi\Big(-\ell \lambda - \frac{-Q'(\xi)}{4\ell}\Big) \eta(\ell)^d\Big|.
	\]
	As $\lambda \neq 0$, we can recognize the sum in $\ell$ as either a Kloosterman sum (when $d$ is even) or a Sali\'{e} sum (when $d$ is odd), with $s=\ell, a=-\lambda$ and $b=\frac{-Q'(\xi)}{4}$, and arrive at
	\[
	\Big|\E_{x,y}f(x)g(x + y)d_\lambda(y) - \E_x f(x) \E_y g(y)\Big| \leq 2q^{-\frac{d - 1}{2}} \sum_{\xi \in \mathbf{F}_q^d} |\widehat{f}(\xi)||\widehat{g}(-\xi)|.
	\]
	Applying Cauchy-Schwarz and Plancherel,
	\[
		\sum_{\xi \in \mathbf{F}_q^d} |\widehat{f}(\xi)||\widehat{g}(-\xi)| \leq \| f \|_2 \| g \|_2,
	\]
	completing the proof.
\end{proof}

\begin{remark}
    The Gauss sum $G(\chi,\eta)$ is defined in \cite[Section 2]{IosevichParshall}, and the Kloosterman sums and Sali\'e sums are defined in \cite[Section 3]{IosevichParshall}. The estimates we used for these sums can also be found in \cite[Section 3]{IosevichParshall}.
\end{remark}


\section{Generalized Distance Graph Embedding Theorems}
\label{sec:main}
We now state our generalization of the theorems on embedding distance graphs with a certain maximal vertex degree. The proofs are exactly analogous to those in \cite{IosevichParshall} using \Cref{distancesTheoremCombined}.

Let $\mathcal{G}$ be a distance graph with vertices labeled as $V = \{v_1, \dots , v_n\}$ and when $v_i$ is adjacent to $v_j$ we label their common edge $e_{i,j}$ and denote its length by $\lambda_{e_{i,j}}\in\F_q^*$. 
For any distance type function $\phi$ as in the previous section, define for any $\lambda\in\F_q^*$,
\[
d_\lambda(x, y) = \begin{cases}
    q, & \phi(x,y)=\lambda\\
    0, & \text{otherwise}
\end{cases}
\]
and
\[
d_{i,j}(x,y) := \begin{cases} d_{\lambda_{e_{i,j}}}(x,y) & \text{if } v_i \text{ and } v_j \text{ are adjacent} \\ 1 & \text{otherwise} \end{cases}.
\]
We denote the count for the number of (ordered) isometric copies of $\mathcal{G}$  with respect to $\phi$ appearing within $A$ as
\begin{equation}\label{edCount}
	\mathcal{N}_{\mathcal{G}}(A) := \E_{x_1, \ldots, x_n} \prod_{i = 1}^n 1_A(x_i) \prod_{j = i + 1}^{n} d_{i,j}(x_i , x_j).
\end{equation}
This is \emph{almost} a normalized count for the number of (ordered) isometric copies of $\mathcal{G}$ appearing within $A$. Now we are ready to state the main theorems. The proofs of the main theorems essentially follow the same arguments in \cite{IosevichParshall}. We include them here for completeness.

\begin{theorem}\label{degreeAsymptoticnew} Let $\mathcal{G} = (V,E)$ be a distance graph with $n$ vertices, $m$ edges, and maximum degree $t$.  Let $A \subseteq \mathbf{F}_q^d$ with $|A| = \alpha q^d$ and
\begin{equation}\label{edAlphaSizenew}
	\alpha \geq 4mq^{t - \frac{d + 1}{2}}.
\end{equation}
Then
\begin{equation}\label{edInductionnew}
	\Big|\mathcal{N}_\mathcal{G}(A) - \alpha^n \Big| \leq 4 m \alpha^{n - 1} q^{t - \frac{d + 1}{2}}.
\end{equation}
\end{theorem}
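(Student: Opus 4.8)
The plan is to prove \Cref{degreeAsymptoticnew} by induction on the number of edges $m$ of the distance graph $\mathcal{G}$, exactly as in \cite{IosevichParshall}, with \Cref{distancesTheoremCombined} supplying the one analytic estimate needed at the inductive step. The base case is $m = 0$: then every $d_{i,j} \equiv 1$, so $\mathcal{N}_\mathcal{G}(A) = \E_{x_1,\dots,x_n}\prod_i 1_A(x_i) = \alpha^n$, and the inequality \eqref{edInductionnew} holds trivially (its right side is nonnegative). For the inductive step, assume the result for all distance graphs with $m-1$ edges and let $\mathcal{G}$ have $m$ edges. Pick a vertex $v_n$ (after relabeling) realizing some edge, say $e_{k,n}$, and write $\mathcal{G}' = \mathcal{G} - e_{k,n}$, a distance graph on the same $n$ vertices with $m-1$ edges and maximum degree $\leq t$. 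I would isolate the $x_n$-average: letting $F(x_n) := 1_A(x_n)\prod_{i<n,\, i\neq k} d_{i,n}(x_i,x_n)$ and peeling off the factor $d_{k,n}(x_k,x_n) = d_{\lambda_{e_{k,n}}}(x_k,x_n)$, one has
\[
\mathcal{N}_\mathcal{G}(A) = \E_{x_1,\dots,x_{n-1}} \Big(\prod_{i<n}1_A(x_i)\prod_{\substack{i\leq j<n}} d_{i,j}(x_i,x_j)\Big)\, \E_{x_n} F(x_n)\, d_{\lambda_{e_{k,n}}}(x_k, x_n).
\]
Here I am using that $v_n$ has degree $\leq t$ in $\mathcal{G}$, but more importantly that removing the single edge $e_{k,n}$ still leaves the configuration on $x_1,\dots,x_{n-1}$ governed by $\mathcal{G}'$ restricted appropriately.

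The heart of the argument is to replace the inner average $\E_{x_n} F(x_n) d_{\lambda_{e_{k,n}}}(x_k,x_n)$ by its "expected value" $\E_{x_n} F(x_n)$ using \Cref{distancesTheoremCombined} with $f = 1_A \cdot (\text{product of at most } t-1 \text{ further } d_{i,n}\text{'s in the } x_n \text{ variable})$ — wait, more precisely one applies the theorem pointwise in $(x_1,\dots,x_{n-1})$ with $g = F$ (a nonnegative function of $x_n$ depending on the frozen $x_1,\dots,x_{n-1}$) and with the other slot being $1_{\{x_k\}}$-type; the cleanest route is to follow \cite{IosevichParshall} and bound the whole discrepancy by summing the error term $C_\phi q^{-(d-1)/2}\|\cdot\|_2\|\cdot\|_2$ over the remaining averages. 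The key point is that after applying \Cref{distancesTheoremCombined}, the main term becomes $\mathcal{N}_{\mathcal{G}'}(A)$ (the count for the graph with one fewer edge), to which the inductive hypothesis applies, giving $|\mathcal{N}_{\mathcal{G}'}(A) - \alpha^n| \leq 4(m-1)\alpha^{n-1}q^{t - (d+1)/2}$, while the error from the single edge deletion contributes at most $4\alpha^{n-1}q^{t-(d+1)/2}$ (this is where the factor $4$ and the constant $C_\phi \leq 2$ must be tracked, using $\|1_A\|_2 = \alpha^{1/2}$ and $L^2$–$L^\infty$ bounds on the partial products $\prod d_{i,n}$, each of which is bounded by $q$ and supported on a set of relative density $\leq q^{-1}$, so that at most $t$ such factors contribute $q^{t}$ after accounting for the $q^{-d}$ normalizations). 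Adding the two contributions gives the claimed bound $4m\alpha^{n-1}q^{t-(d+1)/2}$.

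I would organize the error bookkeeping carefully: the $\|g\|_2$ term, with $g = F$, expands to $\E_{x_n}|F(x_n)|^2$, and since $F$ is a product of $1_A(x_n)$ with up to $t-1$ factors $d_{i,n}$ each taking value $q$ on a set of $x_n$ of density $q^{-1}$ (by \Cref{lem:orthogonality} in the bilinear case, or the corresponding count in the quadratic case), one gets $\|F\|_2^2 \leq \alpha q^{t-1}$ (using that the degree of $v_n$ is at most $t$, so at most $t-1$ edges remain at $v_n$ after removing $e_{k,n}$, but the safe bound that matches the hypothesis is $\|F\|_\infty \leq q^{t-1}$ combined with support considerations); the remaining averages over $x_1,\dots,x_{n-1}$ absorb into an $\alpha^{n-1}$-type factor after another application of Cauchy–Schwarz, exactly as in \cite[proof of Theorem 5]{IosevichParshall}. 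The main obstacle — and the only place one must be attentive — is verifying that the constant $C_\phi \in \{1,2\}$ from the quadratic-form case does not blow up the final constant beyond $4$; since the argument in \cite{IosevichParshall} was already carried out with the constant $2$ (the Salié/Kloosterman bound produces the same $2q^{-(d-1)/2}$ factor there), the present constants are literally unchanged, and the proof goes through verbatim with $d_\lambda$ now defined via the general $\phi$. Everything else is a routine transcription of the edge-deletion induction from \cite{IosevichParshall}, which is why we only sketch it.
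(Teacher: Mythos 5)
Your overall strategy -- edge-deletion induction with \Cref{distancesTheoremCombined} supplying the analytic input, $L^2$-to-$L^1$ comparison of the partial products via $\|f\|_2^2 \leq q^{t-1}\|f\|_1$, and a final Cauchy--Schwarz plus the inductive hypothesis to absorb the error into $4\alpha^{n-1}q^{t-\frac{d+1}{2}}$ -- is exactly the paper's, and your constant bookkeeping (including the factors of $2$ from $\E \prod g_i \leq 2\alpha^{n-2}$ and $\leq 2\alpha^n$) comes out right. Your base case $m=0$ is if anything cleaner than the paper's $m=1$.

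However, the one displayed decomposition you commit to is the wrong one, and as written the step would fail. You freeze $x_1,\ldots,x_{n-1}$ and try to apply \Cref{distancesTheoremCombined} to the single-variable average $\E_{x_n} F(x_n)\,d_{\lambda_{e_{k,n}}}(x_k,x_n)$ with a point mass ``$1_{\{x_k\}}$-type'' function in the other slot. This cannot work: the gain of $q^{-\frac{d-1}{2}}$ in \Cref{distancesTheoremCombined} comes from cancellation in \emph{both} averaged variables, and a normalized point mass $q^d 1_{\{x_k\}}$ has $\|{\cdot}\|_2 = q^{d/2}$ against $\E = 1$, so the error term $C_\phi q^{-\frac{d-1}{2}}\cdot q^{d/2}\cdot\|F\|_2$ swamps the main term $\E F$ unless $\alpha$ is far larger than the hypothesis \eqref{edAlphaSizenew} allows. (Concretely: for a fixed center $x_k$, the weighted count of points of $A$ on a single sphere genuinely can deviate from its mean by much more than the averaged bound permits.) The fix, which is what the paper does, is to freeze the \emph{other} $n-2$ vertices $x_3,\ldots,x_n$ and apply \Cref{distancesTheoremCombined} to the double average over \emph{both} endpoints of the deleted edge, i.e.\ to $\E_{x_1,x_2} f(x_1) g_2(x_2) d_{1,2}(x_1,x_2)$ with $f(x_1)=1_A(x_1)\prod_{j\ge 3} d_{1,j}(x_1,x_j)$ and $g_2(x_2)=1_A(x_2)\prod_{j\ge 3} d_{2,j}(x_2,x_j)$, so that each slot holds a ``fat'' function whose $L^2$ norm is controlled by $q^{t-1}$ times its $L^1$ norm. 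Since you explicitly hedge (``the cleanest route is to follow [IP]'') and the rest of your error accounting matches, this is a repairable misstep rather than a wrong approach, but the displayed formula is the heart of the proof and must be replaced by the two-variable version.
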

\begin{proof}
We proceed with induction on the number of edges, $m$. In the case $m=1$, note the maximum degree is $t = 1$ and we may as well assume we only have $n=2$ vertices, since each isolated vertex contributes a factor of $\alpha$ to $\mathcal{N}_\mathcal{G}(A)$. Setting $f=g=1_A$ in \Cref{distancesTheoremCombined} establishes the base case.

We assume that the theorem has been established for graphs with at most $m-1$ edges. It is obvious, but crucial for us, that the maximum degree of subgraphs of $\mathcal{G}$ remain at most $t$ so that we can proceed to apply (\ref{edInductionnew}) to the subgraph of $\mathcal{G}$ formed by removing an edge. Without loss of generality, assume that $v_1$ and $v_2$ are adjacent.
Set
$$
f(x) := 1_A(x)\prod_{j=3}^n d_{1,j}(x,x_j)
$$
and, for $2 \leq i \leq n$,
\[
	g_i(x) := 1_A(x) \prod_{j = i + 1}^{n} d_{i,j}(x , x_j).
\]
Note that $f(x_1)$ and the $g_i(x_i)$ for $2\leq i\leq n$ together account for all edges of $\mathcal{G}$ except for $e_{1,2}$, which we will be deleting.  With this notation, we can rearrange our count \eqref{edCount} as

\begin{equation}\label{rearrangeCountNew}
	\mathcal{N}_{\mathcal{G}}(A) = \E_{x_3, \ldots, x_n} \prod_{i = 3}^n g_i(x_i) \E_{x_1, x_2} f(x_1) g_2(x_2) d_{1,2}(x_1 , x_2).
\end{equation}

Applying \cref{distancesTheoremCombined} to the inner average,
\begin{equation}\label{applyDistancesNew}
\E_{x_1, x_2} f(x_1) g_2(x_2) d_{1,2}(x_1 , x_2) = \E_{x_1} f(x_1) \E_{x_2} g_2(x_2) + \mathcal{E}
\end{equation}
where
\[
	|\mathcal{E}| \leq 2q^\frac{1 - d}{2} \| f \|_2 \| g_2 \|_2.
\]
Since $f$ and $g_2$ amount to normalized indicator functions, we can essentially replace these $L^2$ norms with simple averages provided we are careful about normalization.  Since they each account for at most $t - 1$ edges (recall that $f$ and $g_2$ account for edges incident on $v_1$ and $v_2$ except the edge $e_{1 , 2}$ and that each existing edge contributes a factor of $q$), we have
\begin{align*}
	\| f \|_2^2 &= \E_{x_1} f(x_1)^2\\
	&\leq q^{t - 1} \E_{x_1} f(x_1)
\end{align*}
and similarly $$\| g_2 \|^2_2 \leq q^{t - 1} \E_{x_2} g(x_2).$$  Hence,
\[
	|\mathcal{E}| \leq 2q^{t - \frac{d + 1}{2}} \Big(\E_{x_1} f(x_1)\Big)^{1/2}\Big(\E_{x_2} g(x_2)\Big)^{1/2}
    =2q^{t - \frac{d + 1}{2}}||f||_1^{1/2} ||g_2||_1^{1/2}.
\]
Together with \eqref{rearrangeCountNew} and \eqref{applyDistancesNew}, we see
\[
	\Big|\mathcal{N}_\mathcal{G}(A) - \E_{x_1, \ldots, x_n} f(x_1) \prod_{i = 2}^{n} g_i(x_i) \Big| \leq 2q^{t - \frac{d + 1}{2}} \E_{x_3, \ldots, x_n} \prod_{i = 3}^n g_i(x_i) \| f \|_1^{1/2} \| g_2 \|_1^{1/2}.
\]
This essentially says that, up to an error, the count for copies of $\mathcal{G}$ within $A$ is the same as that for the subgraph of $\mathcal{G}$ formed by deleting the edge $e_{1,2}$.  We invoke our induction hypothesis to observe
\[
	\Big|\E_{x_1, \ldots, x_n} f(x_1) \prod_{i = 2}^{n} g_i(x_i) - \alpha^n\Big| \leq 4(m - 1) \alpha^{n - 1} q^{t - \frac{d + 1}{2}},
\]
in which case we have shown
\begin{equation}\label{edAlmost}
	\Big|\mathcal{N}_\mathcal{G}(A) - \alpha^n\Big| \leq 4(m - 1) \alpha^{n - 1} q^{t - \frac{d + 1}{2}} + 2q^{t - \frac{d + 1}{2}} \E_{x_3, \ldots, x_n} \prod_{i = 3}^n g_i(x_i) \| f \|_1^{1/2} \| g_2 \|_1^{1/2}.
\end{equation}
Now we need only show that we can bring these two error terms in line.  Applying Cauchy-Schwarz,
\[
	\E_{x_3, \ldots, x_n} \prod_{i = 3}^n f_i(x_i) \| f \|_1^{1/2} \| g_2 \|_1^{1/2} \leq \Big( \E_{x_3, \ldots, x_n} \prod_{i = 3}^n g_i(x_i) \Big)^{1/2} \Big(\E_{x_1, \ldots, x_n} f(x_1) \prod_{i = 2}^n g_i(x_i)\Big)^{1/2}.
\]
By our induction hypothesis, and our assumption that $\alpha$ is not too small, we have both
\begin{align*}
	\E_{x_3, \ldots, x_n} \prod_{i = 3}^n g_i(x_i) &\leq 2\alpha^{n - 2}, \text{ and}\\
	\E_{x_1, \ldots, x_n} f(x_1) \prod_{i = 2}^n g_i(x_i) &\leq 2\alpha^n.
\end{align*}
In particular, together with \eqref{edAlmost}, this allows us to conclude \eqref{edInductionnew} as desired.
\end{proof}
Now it suffices to handle degenerate embeddings; in other words, to make sure that in our count $\mathcal{N}_\mathcal{G}(A)$, at least some are made up of distinct vertices. This is done in the following theorem, which immediately implies \Cref{MainTheorem}. 
\begin{theorem}
    Let $n,t \in \mathbf{N}$ and let $A \subseteq \mathbf{F}_q^d$ with $|A| = \alpha q^d$ with $\alpha \geq 12n^2 q^{t - \frac{d + 1}{2}}$.  Then $A$ contains at least $\frac{1}{2}|A|^n q^{-m}$ isometric copies with respect to the distance type function $\phi$ of every distance graph with $n$ vertices, $m$ edges, and maximum vertex degree $t$.
\end{theorem}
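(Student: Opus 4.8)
The plan is to bound the number of \emph{degenerate} embeddings---those in which two or more of the vertices $x_1,\dots,x_n$ coincide---and subtract this from the lower bound for $\mathcal{N}_{\mathcal{G}}(A)$ coming from \Cref{degreeAsymptoticnew}. First I would unwind the normalization: with $|A| = \alpha q^d$, the quantity $\mathcal{N}_{\mathcal{G}}(A) = \E_{x_1,\dots,x_n}\prod_i 1_A(x_i)\prod_{j>i} d_{i,j}(x_i,x_j)$ equals $q^{-nd}q^m$ times the number of ordered tuples $(x_1,\dots,x_n)\in A^n$ that form an isometric copy of $\mathcal{G}$ (each present edge contributes a factor $q$, and there are $m$ edges). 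So the genuine (injective) isometric copies number $q^{nd-m}\bigl(\mathcal{N}_{\mathcal{G}}(A) - \mathcal{D}\bigr)$, where $\mathcal{D}$ is the contribution to $\mathcal{N}_{\mathcal{G}}(A)$ from tuples with at least one coincidence $x_k = x_\ell$. It therefore suffices to show $\mathcal{N}_{\mathcal{G}}(A) \geq \alpha^n/2$ and $\mathcal{D} \leq \alpha^n q^{nd-m}\cdot\tfrac12 |A|^{n}q^{-m}\cdot q^{-(nd-m)}$... more cleanly: it suffices to show $\mathcal{N}_{\mathcal{G}}(A) - \mathcal{D} \geq \tfrac12\alpha^n = \tfrac12 |A|^n q^{-nd}$, since multiplying by $q^{nd-m}$ then gives the claimed count $\tfrac12|A|^n q^{-m}$.

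Next I would apply \Cref{degreeAsymptoticnew}: the hypothesis $\alpha \geq 12 n^2 q^{t-(d+1)/2} \geq 4m\, q^{t-(d+1)/2}$ holds because a graph with maximum degree $t$ on $n$ vertices has $m \leq nt/2 \leq n^2/2 \leq 3n^2$, so \eqref{edInductionnew} gives $\mathcal{N}_{\mathcal{G}}(A) \geq \alpha^n - 4m\alpha^{n-1}q^{t-(d+1)/2} \geq \alpha^n - \alpha^n\cdot\frac{4m}{12n^2} \geq \alpha^n(1 - \tfrac13) = \tfrac23\alpha^n$. It remains to show $\mathcal{D} \leq \tfrac16\alpha^n$. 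To do this I would fix a pair $k<\ell$ with $x_k = x_\ell$ and bound the resulting count: identifying two vertices of $\mathcal{G}$ produces a distance graph $\mathcal{G}'$ on $n-1$ vertices (possibly a multigraph, or possibly inconsistent---if $v_k,v_\ell$ share an edge with a nonzero length, or if identifying them forces $\phi(x,x) = \lambda \neq 0$, the count is zero, which only helps), still with maximum degree at most $2t$ and with at most $m$ edges, for which \Cref{degreeAsymptoticnew} (applied with $t$ replaced by $2t$; one checks $\alpha \geq 4m q^{2t-(d+1)/2}$ still holds, or more safely one argues the degenerate count is a sum of a bounded number of $\mathcal{N}$-type quantities each of size $O(\alpha^{n-1})$) yields a contribution of order $\alpha^{n-1}$. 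Summing over the at most $\binom{n}{2} \leq n^2/2$ pairs gives $\mathcal{D} \lesssim n^2 \alpha^{n-1}$, and the hypothesis $\alpha \geq 12 n^2 q^{t-(d+1)/2}$, together with $q^{t-(d+1)/2}\leq 1$ when $t$ is small relative to $d$—wait, this need not hold; instead I would use that the per-pair count, by a direct application of \Cref{degreeAsymptoticnew} to the contracted graph, is at most $2\alpha^{n-1}$ only after checking the size hypothesis, so I would instead bound each degenerate term crudely by $2\alpha^{n-1}$ when the contracted graph's parameters satisfy the needed inequality, which they do since $\alpha \geq 12n^2 q^{t-(d+1)/2} \geq 4m' q^{t'-(d+1)/2}$ for the contracted $m' \leq m \leq 3n^2$ and $t' \leq 2t$—this last step needs $q^{2t} / q^{t}$ controlled, so honestly the cleanest route is to follow \cite{IosevichParshall} verbatim: bound $\mathcal{D}$ by a union bound over pairs, each term being a count in $\mathcal{G}$ with one fewer free vertex, estimated directly via \Cref{distancesTheoremCombined} and the same edge-deletion induction, giving $\mathcal{D} \leq \binom{n}{2}\cdot 2\alpha^{n-1}q^{-d}\cdot q^{d} = n^2\alpha^{n-1}$-ish, whence $\mathcal{N}_{\mathcal{G}}(A) - \mathcal{D} \geq \tfrac23\alpha^n - \tfrac{1}{6}\alpha^n \geq \tfrac12\alpha^n$, as needed.

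The main obstacle, and the step requiring the most care, is the bookkeeping for the contracted graph $\mathcal{G}/\{v_k \sim v_\ell\}$: one must verify that its maximum degree and edge count are still controlled (degree $\leq 2t$, edges $\leq m$), handle the degenerate sub-cases where contraction creates a loop with nonzero length (contributing zero, hence harmless), and confirm that the size hypothesis on $\alpha$ remains strong enough to apply \Cref{degreeAsymptoticnew} to it—or, avoiding this, to redo the short edge-deletion induction directly on the contracted configuration using only \Cref{distancesTheoremCombined}. Since \cite{IosevichParshall} carries out exactly this argument in the Euclidean case and our \Cref{distancesTheoremCombined} is the only analytic input, the proof transfers mutatis mutandis, with the constant $2$ from $C_\phi$ already absorbed into the constant $12$.
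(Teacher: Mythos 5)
Your overall plan---lower-bound $\mathcal{N}_{\mathcal{G}}(A)$ via \Cref{degreeAsymptoticnew}, upper-bound the degenerate contribution $\mathcal{D}$, and clear normalizations---is exactly the paper's strategy, and your normalization bookkeeping (the factor $q^{nd-m}$) is correct. But the execution of the bound on $\mathcal{D}$ has a genuine gap. Your final estimate is $\mathcal{D} \lesssim n^2\alpha^{n-1}$ (your ``$\binom{n}{2}\cdot 2\alpha^{n-1}q^{-d}\cdot q^{d}$'', where the $q^{-d}$ and $q^{d}$ cancel). That is not enough: deducing $\mathcal{D}\leq\tfrac16\alpha^n$ from $n^2\alpha^{n-1}$ would require $\alpha\geq 6n^2$, which is impossible since $\alpha\leq 1$. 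The hypothesis of the theorem is $\alpha\geq 12n^2q^{t-\frac{d+1}{2}}$, so the per-coincidence bound must carry a factor of $q^{t-\frac{d+1}{2}}$ (or better) for the union bound to close. The paper gets exactly this: when $x_n$ coincides with some $x_j$, the normalized average over $x_n$ collapses to $q^{-d}\sum_{x_n\in\{x_1,\dots,x_{n-1}\}}(\cdots)$, contributing $(n-1)q^{-d}$ rather than a full average; the at most $t$ edge factors $d_{i,n}(x_i,x_n)$ are bounded trivially by $q^t$; and what remains is $\mathcal{N}_{\mathcal{G}'}(A)\leq 2\alpha^{n-1}$ for the graph $\mathcal{G}'$ obtained by \emph{deleting} $v_n$. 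This yields $\mathcal{E}_n\leq 2(n-1)\alpha^{n-1}q^{t-d}$, and since $q^{t-d}\leq q^{t-\frac{d+1}{2}}$, summing over coincidences gives $\mathcal{D}\leq 2n^2\alpha^{n-1}q^{t-\frac{d+1}{2}}\leq\tfrac16\alpha^n$. Your accounting loses the decisive factor $q^{t-d}$.

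Relatedly, your primary route via the \emph{contracted} graph $\mathcal{G}/\{v_k\sim v_\ell\}$ is a wrong turn that you flag but never resolve: contraction can double the maximum degree to $2t$, and the hypothesis $\alpha\geq 12n^2q^{t-\frac{d+1}{2}}$ does not imply the size condition $\alpha\geq 4m'q^{2t-\frac{d+1}{2}}$ needed to apply \Cref{degreeAsymptoticnew} to the contracted graph (one is short a factor of $q^t$). The paper sidesteps this entirely: it never contracts, it deletes the coinciding vertex and absorbs its incident edge weights into the crude factor $q^t$, keeping the remaining graph's maximum degree at most $t$. To repair your proof, replace the contraction step with vertex deletion plus the trivial $q^t$ bound, and track the $q^{-d}$ from the collapsed average so that the factor $q^{t-d}$ survives into the final estimate.
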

\begin{proof}
Let $\mathcal{G}$ be a distance graph with $n$ vertices, $m$ edges, and maximum vertex degree $t$ with the same labeling scheme as before.  We introduce the restricted average
\[
	\E_{x_1, \ldots, x_n}^* := q^{-nd} \sum_{\substack{x_1, \ldots, x_n \in \mathbf{F}_q^d \\ x_1, \ldots, x_n \text{ distinct}}}
\]
and consider
\[
	\mathcal{N}_\mathcal{G}^*(A) := \E_{x_1, \ldots, x_n}^* \prod_{i = 1}^n 1_A(x_i) \prod_{j = i + 1}^{n} d_{i,j}(x_i , x_j),
\]
which only counts genuine isometric copies of $\mathcal{G}$ appearing within $A$.  If we are considering a configuration within $A$ that is detected by $\mathcal{N}_\mathcal{G}$, but not by $\mathcal{N}^*_\mathcal{G}(A)$, then some vertices coincide.  We will show that the contribution when $x_n$ is equal to one of $x_1, \ldots, x_{n - 1}$, the contribution is negligible.  In particular, we will bound
\[
	\mathcal{E}_n := \E_{x_1, \ldots, x_{n - 1}} \prod_{i = 1}^{n - 1} 1_A(x_i) \prod_{j = i + 1}^{n - 1} d_{i,j}(x_i , x_j) q^{-d} \sum_{x_n \in \{x_1, \ldots, x_{n - 1}\}} 1_A(x_n) \prod_{i = 1}^{n - 1} d_{i,n}(x_i , x_n).
\]
Of course, we can express this as
\[
	\mathcal{E}_n = q^{-d} \E_{x_1, \ldots, x_{n - 1}} \prod_{i = 1}^{n - 1} 1_A(x_i) \prod_{j = i + 1}^{n - 1} d_{i,j}(x_i , x_j) \sum_{j = 1}^{n - 1} \prod_{i = 1}^{n - 1}d_{i,n}(x_i , x_j).
\]
When the innermost sum is nonzero, its contribution is at least bounded by $(n - 1)q^t$, since $v_n$ has degree at most $t$.  As we are assuming $\alpha$ is not too small, we can invoke \cref{degreeAsymptoticnew} to conclude
\[
	\mathcal{E}_n \leq 2(n - 1)\alpha^{n - 1} q^{t - d}.
\]
This is because $\mathcal{E}_n = \E_{x_1, \ldots, x_{n - 1}} \prod_{i = 1}^{n - 1} 1_A(x_i) \prod_{j = i + 1}^{n - 1} d_{i,j}(x_i , x_j) = \mathcal{N}_{\mathcal{G'}}(A)$ where $\mathcal{G'}$ is the graph obtained by removing $v_n$ from $\mathcal{G}$.
Applying a symmetric argument for coincidences involving vertices other than $x_n$, we have shown
\[
	|\mathcal{N}_\mathcal{G}(A) - \mathcal{N}^*_\mathcal{G}(A)| \leq 2n^2 \alpha^{n - 1} q^{t - d}.
\]
Combining this with \cref{degreeAsymptoticnew} with our size requirement on $\alpha$ yields
\[
	|\mathcal{N}^*_\mathcal{G}(A) - \alpha^n| \leq 6n^2 \alpha^{n - 1} q^{t - \frac{d + 1}{2}}
\]
which implies that 
$\mathcal{N}^*_\mathcal{G}(A) \geq \frac{1}{2} \alpha^n$.  The result follows by then clearing normalizations.
\end{proof}

\section{Generalized Path Embedding Theorem}
\label{sec:path}
Theorem 1.1 in \cite{LongPaths} and
Theorem 2.2 in \cite{cycles} can also be generalized for general distance functions $\phi$ using \Cref{distancesTheoremCombined}. 
\begin{theorem}
    \label{thm:generalPaths}
    Let $A\subseteq\F_q^d$. Let $\mathcal{P}_k$ denote the number of embeddings of a path of length $k$ in $A$ with respect to the distance type function $\phi$. Then if
    \[
    |A|>\frac{2k}{\ln 2}q^{\frac{d+1}{2}},
    \]
    we have
    \[
    \left|\mathcal{P}_k - \frac{|E|^{k+1}}{q^k}\right|\leq \frac{2k}{\ln 2}q^{\frac{d+1}{2}}\frac{|E|^k}{q^k}.
    \]
\end{theorem}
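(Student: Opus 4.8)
\emph{Proof strategy.} The plan is to count along the path one vertex at a time, iterating the Functional Distance Theorem. First I would pin down the count: fix edge lengths $\lambda_1,\dots,\lambda_k\in\F_q^*$, let $\mathcal{P}_k$ be the number of ordered tuples $(x_0,\dots,x_k)\in A^{k+1}$ with $\phi(x_{i-1},x_i)=\lambda_i$ for $1\le i\le k$, and set $\alpha:=|A|q^{-d}$. With $d_\lambda$ as in \Cref{sec:functional} and
\[
\mathcal{N}_k:=\E_{x_0,\dots,x_k}\ \prod_{i=0}^{k}1_A(x_i)\ \prod_{i=1}^{k}d_{\lambda_i}(x_{i-1},x_i),
\]
one checks directly that $\mathcal{P}_k=q^{(k+1)d-k}\,\mathcal{N}_k$ and $|A|^{k+1}q^{-k}=q^{(k+1)d-k}\alpha^{k+1}$, so the theorem is equivalent to
\[
\bigl|\mathcal{N}_k-\alpha^{k+1}\bigr|\le\frac{2k}{\ln 2}\,q^{\frac{1-d}{2}}\,\alpha^{k}\qquad\text{whenever } \alpha>\frac{2k}{\ln 2}\,q^{\frac{1-d}{2}}.
\]

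For $\lambda\in\F_q^*$ define $(T_\lambda h)(y):=\E_x h(x)\,d_\lambda(x,y)$. Rewriting \Cref{distancesTheoremCombined} as $\bigl|\E_y g(y)\bigl((T_\lambda h)(y)-\E h\bigr)\bigr|\le C_\phi q^{\frac{1-d}{2}}\|h\|_2\|g\|_2$ for every $g$ and choosing $g=T_\lambda h-(\E h)\mathbf 1$ gives a decomposition $T_\lambda h=(\E h)\,\mathbf 1+e_h$ with $\|e_h\|_2\le C_\phi q^{\frac{1-d}{2}}\|h\|_2$. Now set $g_0:=1_A$ and $g_j:=1_A\cdot T_{\lambda_j}(g_{j-1})$ for $1\le j\le k$; unwinding the iterated averages shows $\E g_j=\mathcal{N}_j$ (this uses only the convention that edge $i$ records $\phi(x_{i-1},x_i)$, so no symmetry of $\phi$ is needed). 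Writing $n_j:=\E g_j$, $s_j:=\|g_j\|_2$ and $\epsilon:=C_\phi q^{\frac{1-d}{2}}$, the decomposition gives $g_j=n_{j-1}\cdot 1_A+1_A\cdot e^{(j)}$ with $\|e^{(j)}\|_2\le\epsilon\, s_{j-1}$, and since $\|1_A\|_2=\alpha^{1/2}$ and $\|1_A e^{(j)}\|_2\le\|e^{(j)}\|_2$ one obtains the coupled recursion
\[
|n_j-\alpha n_{j-1}|\le\alpha^{1/2}\epsilon\, s_{j-1},\qquad s_j\le\alpha^{1/2}n_{j-1}+\epsilon\, s_{j-1},\qquad n_0=\alpha,\ \ s_0=\alpha^{1/2}.
\]

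A short simultaneous induction on $j$ then yields $s_j\le\alpha^{j+1/2}(1+\epsilon/\alpha)^j$, next $n_j\le\alpha^{j+1}(1+\epsilon/\alpha)^j$, and finally $\alpha^{j+1}-n_j\le\alpha^{j+1}\bigl((1+\epsilon/\alpha)^j-1\bigr)$; combining the last two gives $\bigl|\mathcal{N}_k-\alpha^{k+1}\bigr|\le\alpha^{k+1}\bigl((1+\epsilon/\alpha)^k-1\bigr)$. To finish I would invoke the hypothesis, which says exactly $k\epsilon/\alpha<\ln 2$ once one notes $\epsilon\le 2q^{\frac{1-d}{2}}$: convexity of $e^t-1$ gives $e^t-1\le t/\ln 2$ on $[0,\ln 2]$, so $(1+\epsilon/\alpha)^k-1\le e^{k\epsilon/\alpha}-1\le\frac{k\epsilon}{\alpha\ln 2}$, whence $\bigl|\mathcal{N}_k-\alpha^{k+1}\bigr|\le\frac{k\epsilon}{\ln 2}\alpha^{k}\le\frac{2k}{\ln 2}q^{\frac{1-d}{2}}\alpha^{k}$, which rescales to the claim.

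I do not expect a genuine obstacle here: the argument is a disciplined iteration of \Cref{distancesTheoremCombined}. It is worth stressing why one cannot simply quote \Cref{degreeAsymptoticnew}: a path of length $k$ has maximum degree $2$, so that result would only cover the range $|A|\gtrsim q^{\frac{d+3}{2}}$, whereas processing the path linearly pays only a ``degree one'' factor $q^{\frac{1-d}{2}}$ per edge and reaches $|A|\gtrsim q^{\frac{d+1}{2}}$. The parts that need care are the bookkeeping of the two intertwined recursions for $n_j$ and $s_j$ with the correct powers of $\alpha$, and the final convexity step, which is precisely where $\ln 2$ enters: the hypothesis $|A|>\frac{2k}{\ln 2}q^{\frac{d+1}{2}}$ is exactly what legitimizes the linearization $(1+\epsilon/\alpha)^k-1\le\frac{k\epsilon}{\alpha\ln 2}$.
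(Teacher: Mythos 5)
Your argument is correct and is essentially the proof the paper defers to: the paper's proof of \Cref{thm:generalPaths} is a one-line citation of \cite{LongPaths} and \cite{cycles}, and the iteration you describe --- propagating \Cref{distancesTheoremCombined} edge by edge along the path while tracking the $L^1$ and $L^2$ norms of the iterated functions $g_j$, then linearizing $(1+\epsilon/\alpha)^k-1$ by convexity to produce the $\ln 2$ --- is exactly the mechanism of those references. The only cosmetic point is that you apply \Cref{distancesTheoremCombined} to the signed function $g=T_\lambda h-(\E h)\mathbf 1$ although it is stated for nonnegative functions; this is harmless because its Fourier-analytic proof never uses nonnegativity.
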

\begin{proof}
    The argument is identical to those in \cite{LongPaths} and \cite{cycles} by applying \Cref{distancesTheoremCombined}.
\end{proof}

\section{Generalized Tree Embedding Theorem}
\label{sec:tree}
In this section, we generalize the bounds for trees obtained in \cite[Theorem 1.7]{cycles}. The proof follows the same arguments from there and we include them here for completeness. Let $\phi$ be a distance-type function.
For a tree $T$, let $n_T(E)$ be the number of embeddings with respect to $\phi$ of $T$ into $E\subseteq \F_q^d$. 
\begin{theorem}\label{tree counting}
Fix a tree $T$ with $r+1$ vertices and hence $r$ edges.  For $\epsilon>0$, if $|E|>q^{\frac{d+1}{2}+\varepsilon}$ then there is a subset $E^*\subseteq E$ with 
\begin{align*}
|E\setminus E^*|\leq 2q^{-\frac{2\varepsilon}{r+1}}|E|,
\end{align*}
and 
\begin{align*}
\left|n_T(E^*)-\frac{|E|^{r+1}}{q^r}\right|\leq 8\frac{|E|^{r+1}}{q^{r}}q^{-\frac{2\epsilon}{r+1}}
\end{align*}
\end{theorem}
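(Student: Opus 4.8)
The plan is to pass first to a refined subset $E^* \subseteq E$ on which every ``sphere'' of a relevant radius is evenly distributed, and then to evaluate $n_T(E^*)$ by a direct expansion over the vertices of $T$, invoking \Cref{distancesTheoremCombined} only through its consequences for the sizes of one-step neighbourhoods. First I would root $T$ at a vertex $v_0$ and order its vertices $v_0, v_1, \ldots, v_r$ so that each $v_i$ with $i \ge 1$ is joined to a unique earlier vertex $v_{p(i)}$ (its parent) by an edge of length $\lambda_i \in \F_q^*$; then $v_i$ is a leaf of the subtree on $\{v_0, \ldots, v_i\}$. Write $\Lambda = \{\lambda_1, \ldots, \lambda_r\}$ and, for $S \subseteq \F_q^d$, $x \in \F_q^d$, $\lambda \in \F_q^*$, put $N_S(x,\lambda) = |\{\, y \in S : \phi(x,y) = \lambda \,\}|$.

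For the refinement step, taking $f = g = 1_E$ in \Cref{distancesTheoremCombined} gives, for each $\lambda$,
\[
	\sum_{x \in \F_q^d} N_E(x,\lambda) = |E|\,q^{d-1} + O\!\left(q^{\frac{d-1}{2}}|E|\right),
\]
and I expect the same Fourier computation, combined with the classical count for the number of points in an intersection of two level sets of $\phi$ (two affine hyperplanes in the bilinear case, a hyperplane section of a sphere in the quadratic case), to give $\sum_x N_E(x,\lambda)^2 = q^{d-2}|E|^2 + O(q^{d-1}|E|)$; subtracting yields the variance bound $\sum_{x \in \F_q^d}\bigl(N_E(x,\lambda) - |E|/q\bigr)^2 = O(q^{d-1}|E|)$. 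By Chebyshev, for any $\theta > 0$ all but $O(q^{d+1}/(\theta^2|E|))$ points $x \in E$ satisfy $|N_E(x,\lambda) - |E|/q| \le \theta|E|/q$, so discarding from $E$ each point that fails this for some $\lambda \in \Lambda$ produces $E^*$ with $|E \setminus E^*| = O(rq^{d+1}/(\theta^2|E|))$ and with $N_{E^*}(x,\lambda) = N_E(x,\lambda) + O(|E \setminus E^*|)$. The hypothesis $|E| > q^{\frac{d+1}{2}+\epsilon}$ should then let me pick $\theta$ of order $q^{-\frac{2\epsilon}{r+1}}$ for which simultaneously $|E \setminus E^*| \le 2q^{-\frac{2\epsilon}{r+1}}|E|$ and $|E \setminus E^*| = O(\theta|E|/q)$, the latter forcing
\[
	N_{E^*}(x,\lambda) = \frac{|E|}{q}\bigl(1 + O(\theta)\bigr) \qquad \text{for all } x \in E^*,\ \lambda \in \Lambda.
\]

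For the count itself I would write
\[
	n_T(E^*) = \sum_{x_0, \ldots, x_r \in E^*}\ \prod_{i=1}^{r}\mathbf 1\bigl[\phi(x_i, x_{p(i)}) = \lambda_i\bigr]
\]
and perform the summations over $x_r, x_{r-1}, \ldots, x_1$ in that order. At the moment $x_i$ is summed, $v_i$ is a leaf of the remaining tree, so the factors involving $x_i$ are $\mathbf 1[\phi(x_i, x_{p(i)}) = \lambda_i]$ together with the factors $N_{E^*}(x_i,\lambda_j)$ coming from the already-summed children $v_j$ of $v_i$; since the latter product is $(|E|/q)^{c_i}(1+O(\theta))$ uniformly over $x_i \in E^*$, summing $x_i$ off produces $(|E|/q)^{c_i}(1+O(\theta))\,N_{E^*}(x_{p(i)},\lambda_i)$, again uniform in $x_{p(i)}\in E^*$. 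Iterating from the leaves, each of the $r$ edges contributes one factor $(|E|/q)(1+O(\theta))$ and the last sum (over $x_0$) contributes $|E^*|$, so $n_T(E^*) = |E^*|\,(|E|/q)^{r}\,(1 + O(r\theta))$. Since $|E^*| = |E|\bigl(1 + O(q^{-\frac{2\epsilon}{r+1}})\bigr)$ and $\theta = O(q^{-\frac{2\epsilon}{r+1}})$, this already gives $\bigl| n_T(E^*) - |E|^{r+1}/q^{r}\bigr| = O\bigl(q^{-\frac{2\epsilon}{r+1}}\,|E|^{r+1}/q^{r}\bigr)$, and one reads off the stated constants by tuning the implied constants in the refinement step.

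The hard part is precisely that tuning. One must choose a single threshold $\theta$ so that the discarded set, of size $O(rq^{d+1}/(\theta^2|E|))$, is at once bounded by $2q^{-2\epsilon/(r+1)}|E|$ (the claim on $|E \setminus E^*|$), at most $\theta|E|/q$ (so that replacing $N_E$ by $N_{E^*}$ changes nothing essential), and small enough that the compound factor $(1+\theta)^r$ accumulated over the $r$ summations costs only the constant $8$. Checking that these requirements are mutually consistent — and that the exponent $\tfrac{2\epsilon}{r+1}$ together with the constants $2$ and $8$ is exactly what the optimization produces — is a short but fiddly computation in powers of $q$ that closes because $|E|^2 > q^{d+1+2\epsilon}$; here I would follow the corresponding argument in \cite{cycles}. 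The only other non-formal ingredient is the second-moment estimate, which needs the point count for an intersection of two quadrics (equivalently, one more pass through the Fourier argument behind \Cref{distancesTheoremCombined}); everything else is bookkeeping.
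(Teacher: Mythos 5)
Your route is genuinely different from the paper's, and it has a gap at exactly the step you flagged as ``tuning.'' You need the \emph{two-sided} pointwise estimate $N_{E^*}(x,\lambda) = \frac{|E|}{q}(1+O(\theta))$ for all $x\in E^*$, and you propose to get its lower half by writing $N_{E^*}(x,\lambda)\ge N_E(x,\lambda)-|E\setminus E^*|$ and demanding $|E\setminus E^*|=O(\theta|E|/q)$. That demand is unachievable in the stated regime. Chebyshev with variance $O(q^{d-1}|E|)$ gives $|E\setminus E^*|\lesssim r\,q^{d+1}/(\theta^2|E|)$, and the condition $q^{d+1}/(\theta^2|E|)\le \theta|E|/q$ forces $\theta^3|E|^2\ge q^{d+2}$; with $|E|=q^{\frac{d+1}{2}+\epsilon}$ this reads $\theta^3\ge q^{1-2\epsilon}$, which is impossible for $\epsilon<\tfrac12$ even with $\theta=1$. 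The underlying obstruction is structural, not a matter of constants: any exceptional set of size $\sim q^{-\frac{2\epsilon}{r+1}}|E|$ is much larger than a single neighbourhood $|E|/q$, so $E\setminus E^*$ could in principle contain the entire $\lambda$-neighbourhood of some $x\in E^*$, making $N_{E^*}(x,\lambda)=0$. Hence the uniform lower bound on which your leaf-peeling expansion rests cannot be recovered after the refinement, and the main term can be undercounted. (Separately, your second-moment bound should be centered at $|E||S_\lambda|/q^d$ rather than $|E|/q$; the discrepancy $|S_\lambda|/q^{d-1}=1+O(q^{1-d/2})$ is harmless only when $\theta\gg q^{1-d/2}$, which fails for small $d$. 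And for $r=1$ your numerology for $\theta$ versus $|E\setminus E^*|$ is inconsistent even before the issue above.)

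The paper's proof avoids all of this by using only a \emph{one-sided} regularity condition, $E^*=\{x\in E:\sum_y E(y)d_t(x,y)\le \lambda|E|/q\}$, whose exceptional set is controlled by Markov (first moment only, no intersection-of-spheres count needed) and which, being an upper bound, is automatically inherited by the subset $E^*$. The main term is then \emph{not} extracted from pointwise neighbourhood counts at all: one inducts on the number of edges, peeling a leaf $v$ with neighbour $w$, and applies \Cref{distancesTheoremCombined} with $f=T'$ (the function counting embeddings of the smaller tree sending $w$ to $x$) and $g=1_{E^*}$. The functional theorem supplies the main term $q^{-1}\|T'\|_1|E^*|$ directly, and the one-sided bound enters only to convert the $L^2$ error into an $L^1$ quantity via $\|T'\|_2^2\le \|T'\|_\infty\|T'\|_1\le (\lambda|E|/q)^{r-1}\|T'\|_1$. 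If you want to salvage your expansion you would need to bound the aggregate loss from vertices with deficient $E^*$-neighbourhoods, which requires an $L^\infty$ bound on the partial counting functions --- at which point you have essentially reconstructed the paper's argument.
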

\begin{proof}
Recall for $t\in\F_q^*$, we defined
\[
d_t(x, y) = \begin{cases}
    q, & \phi(x,y)=t\\
    0, & \text{otherwise}
\end{cases}.
\]
For $\lambda=\lambda(q)>0$ to be specified later (with the property that $\lambda(q)\to\infty$), let
\begin{align*}
E^*&=\left\{x\in E: \ \sum_y{E(y)d_t(x,y)}\leq \lambda \frac{|E|}{q}\right\}.
\end{align*}

Then $|E\setminus E^*|\leq 2\frac{|E|}{\lambda}$ for $q$ sufficiently large, since
\begin{align*}
|E\setminus E^*|\leq \frac{q}{\lambda|E|}\sum_{x,y}{E(x)E(y)d_t(x,y)}\leq 2\frac{|E|}{\lambda}.
\end{align*}
Note that our theorem follows by setting $\lambda=q^{\frac{2\varepsilon}{r+1}}$ in the following lemma.
\begin{lemma}\label{tree lemma}
If $T$ has $r+1$ vertices and hence $r$ edges, then 
\begin{align*}
\left|n_T(E^*)-\frac{|E|^{r+1}}{q^r}\right|
\leq 4r\left(\lambda^{-1}+\lambda^{\frac{r-1}{2}}\frac{q^{\frac{d+1}{2}}}{|E|}\right).
\end{align*}
\end{lemma}
\begin{proof}[Proof of Lemma \ref{tree lemma}]
We proceed by induction on $r$, noting for the case $r=1$ that 
\begin{align*}
\left|n_T(E^*)-\frac{|E^*|^2}{q}\right|\leq 2q^{\frac{d-1}{2}}|E^*|
\end{align*}
Moreover $|E|-|E^*|\leq 2\frac{|E|}{\lambda}$, so 
\begin{align*}
\left|n_T(E^*)-\frac{|E|^2}{q}\right|&\leq \left|n_T(E^*)-\frac{|E^*|^2}{q}\right|+\frac{1}{q}\left||E|^2-|E^*|^2\right| \\
&\leq 2q^{\frac{d-1}{2}}|E|+\frac{4|E|^2}{q\lambda}
\end{align*}
Fix a degree 1 vertex $v$ of $T$ with unique neighbor $w$, and let $T'$ be the tree obtained from $T$ by deleting $v$, and the edge from $v$ to $w$.  We will assume the result holds for $T'$, and deduce that it holds for $T$.  For $x\in E^*$, let $T'(x)$ be the number of embeddings $\sigma$ of $T'$ into $E^*$ with $\sigma(w)=x$.  
Then we have by \Cref{distancesTheoremCombined}, after renormalization,
\begin{align*}
n_T(E^*)&=\sum_{x,y\in \mathbb{F}_q^d}{E^*(y)d_t(x,y)T'(x)}
=q^{-1}||T'||_{1} |E^*|+R, \ \ \text{where} \nonumber \\
R&\leq 2q^{\frac{d-1}{2}}||T'||_{2}|E^*|^{1/2}.
\end{align*}
For the purpose of this proof, we use a different normalization for the $L^1$ and $L^2$ norms of a function from $\F_q^d$ to $\mathbb{C}$. Namely
\[ {||f||}_{2}=\left(\sum_{x \in {\Bbb F}_q^d} {|f(x)|}^2\right)^{1/2},\]
and 
\[ {||f||}_1=\sum_{x \in {\Bbb F}_q^d} {|f(x)|}.\]
By our inductive assumption, 
\begin{align*}
\left|||T'||_1-\frac{|E|^r}{q^{r-1}}\right|
\leq (4r-4)\frac{|E|^{r}}{q^{r-1}}\left(\lambda^{-1}+\lambda^{\frac{r-2}{2}}\frac{q^{\frac{d+1}{2}}}{|E|}\right).
\end{align*}
and in particular for $q$ sufficiently large we have $||T'||_1\leq 2\frac{|E|^r}{q^{r-1}}$.  Thus,
\begin{align*}
\left|q^{-1}||T'||_1|E^*|-\frac{|E|^{r+1}}{q^r}\right|
&\leq q^{-1}||T'||_1(|E|-|E^*|)+\left|q^{-1}||T'||_1|E|-\frac{|E|^{r+1}}{q^r}\right| \\
&\leq \frac{|E|^{r+1}}{q^r}\left((4r-2)\lambda^{-1}+(4r-4)\lambda^{\frac{r-2}{2}}\frac{q^{\frac{d+1}{2}}}{|E|}\right)
\end{align*}
So to prove the lemma, it only remains to bound $R$, i.e. to bound $||T'||_{2}$.  Let
\begin{align*}
D=\max_{x\in E^*}\sum_y{E^*(y)d_t(x,y)},
\end{align*}
then 
\[
D \leq \lambda\frac{|E|}{q}.
\]
Also, trivially for any $x\in E^*$, $T'(x)\leq D^{r-1}$.  Therefore,
\begin{align*}
||T'||_{2}^2\leq \max_x{T'(x)}\cdot ||T'(x)||_{1}
\leq 2\left(\frac{\lambda|E|}{q}\right)^{r-1}\frac{|E|^r}{q^{r-1}}
=2\lambda^{r-1}\frac{|E|^{2r-1}}{q^{2r-2}}
\end{align*}
and thus
\begin{align*}
R\leq 4\lambda^{\frac{r-1}{2}}q^{\frac{d+1}{2}}\frac{|E|^{r}}{q^r},
\end{align*}
We conclude that
\begin{align*}
\left|n_T(E^*)-\frac{|E|^{r+1}}{q^r}\right|
\leq 4r\frac{|E|^{r+1}}{q^r}\left(\lambda^{-1}+\lambda^{\frac{r-1}{2}}\frac{q^{\frac{d+1}{2}}}{|E|}\right)
\end{align*}
completing the proof of the lemma.
\end{proof}
\end{proof}

\section{Generalized Cycle Embedding Theorem}
\label{sec:cycle}
In this section, we generalize the results for embedding cycles obtained in \cite{cycles}. Let $C_n$ denote the number of embeddings of an $n$-cycle into $E$, and let $C_n^*$ be the number of non-degenerate ones.

\begin{theorem}\label{cyclesMain}
Let
\begin{align*}
\gamma=\left\{
\begin{array}{ll}
-1 & :d=2 \\
-\frac{d-2}{2} & :d\geq 3
\end{array}
\right.
\end{align*}
If
\begin{align*}
12q^{\gamma}+8\frac{q^{d+2}}{|E|^2}+\left(24+12\left\lfloor \frac{n}{2}\right\rfloor\right)\frac{q^{\frac{d+1}{2}}}{|E|}\leq 1,
\end{align*}
then for $n\geq 6$,
\begin{align*}
\left|C_n-\frac{|E|^n}{q^n}\right|
\leq \frac{|E|^n}{q^n}\left(12q^{\gamma}+8\frac{q^{d+1}}{|E|^2}+\left(24+12\left\lfloor \frac{n}{2}\right\rfloor\right)\frac{q^{\frac{d+1}{2}}}{|E|}\right)
\end{align*}
Moreover,
\begin{align*}
\left|C_4-\frac{|E|^4}{q^4}\right|
&\leq \frac{|E|^4}{q^4}\left(12q^{\gamma}+8\frac{q^{d+2}}{|E|^2}+28\frac{q^{\frac{d+1}{2}}}{|E|}\right) \\
\left|C_5-\frac{|E|^5}{q^5}\right|
&\leq \frac{|E|^5}{q^5}\left(12q^{\gamma}+8\frac{q^{\frac{2d+3}{2}}}{|E|^2}+32\frac{q^{\frac{d+1}{2}}}{|E|}\right).
\end{align*}
\end{theorem}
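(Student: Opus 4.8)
The plan is to follow the strategy of \cite{cycles}: the only ingredient that refers to the specific distance‑type function $\phi$ is the bilinear estimate \Cref{distancesTheoremCombined}, and once that is available for a general non‑degenerate bilinear or quadratic $\phi$ — as established in \Cref{sec:functional} — every remaining step treats the distance relation as a black box satisfying that inequality, exactly as happens for \Cref{thm:generalPaths} and \Cref{tree counting}. Fix $\lambda \in \Fq^\ast$ and write $C_n$ as the count of closed walks $(x_1,\dots,x_n) \in E^n$ with $\phi(x_i,x_{i+1}) = \lambda$ for all $i$ taken mod $n$, and $C_n^\ast$ for the same count restricted to $n$ distinct vertices. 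The argument splits into (i) an estimate for the full closed‑walk count $C_n$, and (ii) a bound for the degenerate contribution $C_n - C_n^\ast$.

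For (i) I would pass to the symmetric matrix $M$ indexed by $E$ with $M_{x,y} = 1$ when $\phi(x,y) = \lambda$ and $0$ otherwise, so that $C_n = \operatorname{tr}(M^n) = \sum_i \mu_i^n$. Extending eigenvectors of $M$ by zero to $\Fq^d$ and feeding them into \Cref{distancesTheoremCombined} (with $C_\phi \le 2$) produces the required spectral gap: there is one eigenvalue $\mu_1 = \tfrac{|E|}{q} + O(q^{(d-1)/2})$, whose eigenvector is close to $1_E$ (on the orthogonal complement of $1_E$ the mean terms $\E_x f\,\E_y g$ drop out), while all remaining eigenvalues obey $|\mu_i| \le 2q^{(d-1)/2}$. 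One then writes
\[
\bigl| C_n - \mu_1^{\,n} \bigr| \le \Bigl(\max_{i \ge 2}|\mu_i|\Bigr)^{n-2}\sum_{i \ge 2}\mu_i^2 \le \bigl(2q^{(d-1)/2}\bigr)^{n-2}\operatorname{tr}(M^2),
\]
where $\operatorname{tr}(M^2) = \#\{(x,y)\in E^2 : \phi(x,y) = \lambda\}$ is itself $\tfrac{|E|^2}{q}$ up to the error furnished by \Cref{distancesTheoremCombined} with $f=g=1_E$. Expanding $\mu_1^{\,n} = (|E|/q)^n\bigl(1 + O(q^{(d+1)/2}/|E|)\bigr)^n$ isolates the main term $|E|^n/q^n$; the three error terms in the statement are then precisely the contributions of, respectively, the non‑principal spectrum raised to the $n$‑th power (the $q^{\gamma}$ term, which needs $d \ge 3$, hence the case split in the definition of $\gamma$), the $n$‑fold propagation of the correction to $\mu_1$ together with the second‑moment term (the $q^{d+1}/|E|^2$ term), and the first‑order errors accumulated over the edges, whose number is governed by a pairing of the $n$ edges and hence by $\lfloor n/2\rfloor$.

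For (ii) I would observe that a closed walk visiting only $k < n$ distinct vertices is an isometric copy, with respect to $\phi$, of some connected distance graph on $k$ vertices; by \Cref{degreeAsymptoticnew} the number of such copies is $O(|E|^k/q^{k-1})$, which under the hypothesis of the theorem is negligible against $|E|^n/q^n$ (the hypothesis forces $|E|/q \to \infty$). Summing over the bounded list of collapse patterns gives $|C_n - C_n^\ast| \le$ an error of the same shape as in (i), so the estimate passes from $C_n$ to $C_n^\ast$. The cases $n = 4$ and $n = 5$ are recorded separately only because for these short cycles a different degenerate pattern dominates, which changes the exponent in the middle error term; the computation is otherwise word‑for‑word that of \cite{cycles}.

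I do not expect a genuine obstacle: the entire content is in verifying that the chain of inequalities in \cite{cycles} uses nothing about $\phi$ beyond \Cref{distancesTheoremCombined} and elementary counting — which it does not — so the general case follows by replaying the original argument with the generalized distance estimate in place of its classical predecessor. The only real work is carrying the same explicit constant bookkeeping that yields the coefficients $12$, $8$, and $24 + 12\lfloor n/2\rfloor$.
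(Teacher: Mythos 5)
Your overall plan --- reduce everything to \Cref{distancesTheoremCombined} and replay \cite{cycles} --- misidentifies the actual analytic content of this theorem, and the spectral argument you substitute both requires an unstated hypothesis and cannot produce the stated error terms. The paper's route is different: it proves a \emph{two-edge} functional distance theorem (\Cref{thm:two_edge_functional}), bounding $\sum_{\phi(x,z)=\phi(w,y)=t} f(x,y)g(z,w)$ for functions of \emph{pairs} of points, and then follows \cite{cycles}, which decomposes the cycle into $\lfloor n/2\rfloor$ two-edge pieces. Proving that two-edge estimate for a general $\phi$ is where the new work lies, and it genuinely needs more than \Cref{distancesTheoremCombined}: in the quadratic-form case one needs the pointwise Fourier decay $|\hat{S}_t(m)|\le 2q^{-\frac{d+1}{2}}$ of the level sets $S_t=\{x:Q(x)=t\}$ together with the counts $|S_t|=q^{d-1}+O(q^{d/2})$, and in the bilinear case the orthogonality relation of \Cref{lem:orthogonality} applied in the two-variable setting. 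Your assertion that the chain of inequalities in \cite{cycles} ``uses nothing about $\phi$ beyond \Cref{distancesTheoremCombined}'' is therefore false; supplying these sphere lemmas is exactly what Section 6 of the paper does. A telltale sign is the $|E|$-independent error term $12q^{\gamma}=12q^{-\frac{d-2}{2}}$: it comes from the fluctuation $|S_t|^2=q^{2d-2}+O(q^{\frac{3d-2}{2}})$ inside the two-edge estimate, and cannot arise from your non-principal spectrum, whose relative contribution is $\asymp q^{\frac{(n-2)(d-1)}{2}+n-1}/|E|^{n-2}$ and depends on both $|E|$ and $n$.

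Even on its own terms the trace-method argument has gaps. Writing $C_n=\operatorname{tr}(M^n)$ for a single matrix $M$ presupposes that all $n$ edges of the cycle carry the same length $\lambda$; for prescribed edge lengths $\lambda_1,\dots,\lambda_n$ you would need $\operatorname{tr}(M_{\lambda_1}\cdots M_{\lambda_n})$, which is not controlled by the eigenvalues of any one matrix. The eigenvalue localization itself can be repaired (Courant--Fischer gives $|\mu_i|\le 2q^{\frac{d-1}{2}}$ for $i\ge 2$ because the Rayleigh quotient is small on $1_E^{\perp}$, and a block decomposition against $1_E$ pins $\mu_1=|E|/q+O(q^{\frac{d-1}{2}})$), but the resulting inequality is simply a different theorem: it will not reproduce the coefficients $12$, $8$, $24+12\lfloor n/2\rfloor$, nor the three distinct middle exponents $q^{d+2}/|E|^2$, $q^{\frac{2d+3}{2}}/|E|^2$, $q^{d+1}/|E|^2$ for $n=4$, $5$, and $n\ge 6$. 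Finally, your part (ii) addresses the wrong statement: in this paper $C_n$ already counts degenerate configurations, and the passage to the non-degenerate count $C_n^*$ is the separate \Cref{nondegenerateCycles}. To prove the theorem as stated you must establish \Cref{thm:two_edge_functional} for general $\phi$ --- i.e., the sphere-size and Fourier-decay lemmas --- and then carry the constants through the two-edge decomposition of \cite{cycles}; the one-edge estimate alone does not suffice.
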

This theorem says that if $|E|$ is much bigger than $q^{\frac{d+2}{2}}$ for $q$ sufficiently large, then $C_n$ is very close to $\frac{|E|^n}{q^n}$.  For the case $n=4$, we cannot get a nontrivial result without the restriction on the size of $E$.  However, for larger $n$ we can accept a weaker restriction on the size of $E$, at the cost of a weaker error term.  The techniques for proving these theorems are essentially the same.
\begin{theorem}\label{cyclesMain2}
For $n\geq 5$ and $q$ sufficiently large,
\begin{align*}
\left|C_n-\frac{|E|^n}{q^n}\right|
\leq \left(36+80\cdot 6^{\left \lfloor \frac{n}{2}\right\rfloor-2}+12\left\lfloor \frac{n}{2}\right\rfloor\right)\frac{|E|^n}{q^n}q^{-\left(\frac{n}{2}-1\right)\delta}
\end{align*}
whenever
\begin{align*}
|E|\geq \left\{\begin{array}{ll}
q^{\frac{1}{2}\left(d+2-\frac{k-2}{k-1}+\delta\right)} & : n=2k, \ \text{even} \\
q^{\frac{1}{2}\left(d+2-\frac{2k-3}{2k-1}+\delta\right)} & :n=2k+1 \ \text{odd}
\end{array}
\right.
\end{align*}
where 
\begin{align*}
0<\delta<\frac{1}{2\left\lfloor\frac{n}{2}\right\rfloor^2}
\end{align*}
\end{theorem}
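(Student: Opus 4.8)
The plan is to follow the strategy of \cite{cycles}, substituting \Cref{distancesTheoremCombined} for the standard-distance estimate used there. The organizing idea is to reduce the count of $n$-cycles to a count of paths, which we already control by iterating \Cref{distancesTheoremCombined} (as in \Cref{thm:generalPaths} and \Cref{tree lemma}). Concretely, an ordered embedding $(z_1,\dots,z_n)$ of an $n$-cycle into $E$ with prescribed nonzero edge lengths $t_1,\dots,t_n$ is the same datum as a pair of internally disjoint paths from $z_1$ to $z_{k+1}$ (with $n=2k$ or $n=2k+1$): one of length $\lfloor n/2\rfloor$ and one of length $\lceil n/2\rceil$, carrying the complementary halves of the length tuple. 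Writing $a_\ell^{(\mathbf s)}(x,y)$ for the number of embeddings into $E$ of a path of length $\ell$ from $x$ to $y$ with edge-length tuple $\mathbf s$, this yields
\[
    C_n = \sum_{x,y\in E} a_{\lfloor n/2\rfloor}^{(\mathbf t')}(x,y)\, a_{\lceil n/2\rceil}^{(\mathbf t'')}(x,y),
\]
which already explains the even/odd dichotomy and the role of $\lfloor n/2\rfloor$ throughout the statement.

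Next I would control the path-count function in two complementary ways. First, iterating \Cref{distancesTheoremCombined} with $f=g=1_E$ at each step shows, as in \Cref{thm:generalPaths}, that $\sum_y a_\ell^{(\mathbf s)}(x,y)$ is $|E|^\ell q^{-\ell}$ up to a small relative error for $q$ large. Second, exactly as in the proof of \Cref{tree lemma}, I would fix a threshold $\lambda=\lambda(q)\to\infty$, pass to the subset $E^\ast\subseteq E$ on which every one-step sphere count $\sum_{y\in E}d_t(x,y)$ is at most $\lambda|E|/q$ (so $|E\setminus E^\ast|\le 2|E|/\lambda$), and then use the uniform bound $a_\ell^{(\mathbf s)}(x,y)\le(\lambda|E|/q)^{\ell-1}$ on $E^\ast$ --- the analogue of $T'(x)\le D^{r-1}$ --- together with the $L^1$ estimate to deduce an $L^2$ bound of the form $\sum_y a_\ell^{(\mathbf s)}(x,y)^2 \le 2\lambda^{\ell-1}|E|^{2\ell-1}q^{-(2\ell-1)}$.

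Feeding the good/bad decomposition of $a_{\lfloor n/2\rfloor}$ and $a_{\lceil n/2\rceil}$ into the cycle identity, the product of the two main terms produces $|E|^nq^{-n}$, the cross terms are estimated by Cauchy--Schwarz in $x$ using the $L^2$ bound above, and the contribution of pairs with a vertex outside $E^\ast$ is controlled crudely via $|E\setminus E^\ast|\le 2|E|/\lambda$ and the uniform pointwise bound. Unwinding this gives an estimate of the shape
\[
    \Bigl|C_n-\frac{|E|^n}{q^n}\Bigr| \le c_n\,\frac{|E|^n}{q^n}\Bigl(\lambda^{-1}+\lambda^{\lfloor n/2\rfloor-1}\,\frac{q^{(d+1)/2}}{|E|}+\cdots\Bigr),
\]
where $c_n$, once the recursion is unwound, has the stated form $36+80\cdot 6^{\lfloor n/2\rfloor-2}+12\lfloor n/2\rfloor$, and the omitted lower-order terms are the $q^{d+2}/|E|^2$-type contributions visible already for $C_4$, $C_5$. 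Choosing $\lambda=q^{\Theta(\delta)}$ so as to balance the $\lambda^{-1}$ loss from discarding $E\setminus E^\ast$ against the $\lambda^{\lfloor n/2\rfloor-1}$ growth coming from the $\lfloor n/2\rfloor-1$ iterated sphere bounds then produces both the precise size thresholds --- $\frac{1}{2}\bigl(d+2-\frac{k-2}{k-1}+\delta\bigr)$ for $n=2k$ and $\frac{1}{2}\bigl(d+2-\frac{2k-3}{2k-1}+\delta\bigr)$ for $n=2k+1$ --- and the saving $q^{-\left(\frac{n}{2}-1\right)\delta}$. \Cref{cyclesMain} is the companion regime $|E|\ge q^{(d+2)/2}$, in which one may take $\lambda$ essentially bounded and simply read off the error; the $C_4$ and $C_5$ statements require separate bookkeeping because the path halves are then too short for the generic estimate, forcing the $q^{d+2}/|E|^2$-type terms to be kept explicitly.

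\textbf{The main obstacle} I expect is the bookkeeping in this optimization: one has to follow how the multiplicative errors from the $\lfloor n/2\rfloor-1$ applications of \Cref{distancesTheoremCombined} compound, make sure the two-path decomposition delivers a genuine two-sided estimate rather than merely an upper bound for $C_n$, and control how the discarded set $E\setminus E^\ast$ feeds back through the Cauchy--Schwarz step, all while tuning $\lambda$ so that the exponent is exactly $\frac{1}{2}\bigl(d+2-\frac{k-2}{k-1}+\delta\bigr)$ rather than the cruder $\frac{d+1}{2}\cdot\frac{n}{n-1}$ that a naive operator-norm or trace argument would yield. A secondary point is absorbing degenerate embeddings (coincidences among the $z_i$) into the stated error, which is handled as in \cite{cycles} by applying \Cref{degreeAsymptoticnew}-type bounds to the cycle with one vertex identified --- routine, but it must be carried along throughout.
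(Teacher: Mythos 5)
Your combinatorial skeleton matches the one the paper (following \cite{cycles}) uses: split the $n$-cycle into two paths joining a pair of vertices, truncate to a set $E^\ast$ on which the one-step sphere counts are at most $\lambda|E|/q$, bound the path-count function pointwise by $(\lambda|E|/q)^{\ell-1}$, and optimize $\lambda=q^{\Theta(\delta)}$; the even/odd thresholds and the factor $q^{-(\frac{n}{2}-1)\delta}$ do come out of exactly that optimization. But there is a genuine gap at the central analytic step. You propose to evaluate $\sum_{x,y\in E}a_{\lfloor n/2\rfloor}(x,y)\,a_{\lceil n/2\rceil}(x,y)$ by writing each factor as a ``main term plus fluctuation'' and handling cross terms by Cauchy--Schwarz, using only the one-edge estimate of \Cref{distancesTheoremCombined}. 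Iterating \Cref{distancesTheoremCombined} only controls the marginals $\sum_y a_\ell(x,y)$; it gives no main-term decomposition of the two-variable function $a_\ell(x,y)$ itself, and Cauchy--Schwarz in $x,y$ yields only an upper bound, not the asymptotic $|E|^n/q^n$. Worse, any attempt to prove such a decomposition in $L^2$ by expanding $\sum_{x,y}|a_\ell(x,y)-|E|^{\ell-1}q^{-\ell}|^2$ counts pairs of length-$\ell$ paths sharing both endpoints, i.e.\ $2\ell$-cycles --- precisely what you are trying to estimate, so the argument is circular.

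The paper breaks this circularity with the Two-Edge Functional Distance Theorem (\Cref{thm:two_edge_functional}): one takes $f(x,y)$ and $g(z,w)$ to be the counts of the two open half-paths and closes the cycle by imposing the \emph{two} remaining distance constraints $\phi(x,z)=\phi(w,y)=t$ simultaneously, extracting the main term $q^{-2}\|f\|_1\|g\|_1$ with errors in $\|f\|_2\|g\|_2$ and in the marginals $\|F\|_2\|G\|_2$, $\|F'\|_2\|G'\|_2$. Proving that two-edge estimate is the only genuinely new work in this section: for a quadratic form it requires the sphere size bounds of \Cref{sphere bounds nonzero} and the Fourier decay $|\hat S_t(m)|\leq 2q^{-\frac{d+1}{2}}$, obtained from Gauss, Kloosterman and Sali\'e sums, while for a bilinear form it follows from \Cref{lem:orthogonality}. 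Your proposal never identifies the need for a two-constraint estimate or for the sphere Fourier transform, and without it the gluing step cannot be completed.
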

Our final variant of Theorem \ref{cyclesMain} is to count non-degenerate cycles.
\begin{theorem}\label{nondegenerateCycles}
For $n\geq 4$ and $q$ sufficiently large, if
\begin{align*}
|E|\geq \left\{\begin{array}{ll}
q^{\frac{1}{2}\left(d+2-\frac{k-2}{k-1}+\delta\right)} & :n=2k \\
q^{\frac{1}{2}\left(d+2-\frac{2k-3}{2k-1}+\delta\right)} & :n=2k+1
\end{array}
\right.
\end{align*}
then
\begin{align*}
\left|C_n^*-\frac{|E|^n}{q^n}\right|\leq \frac{|E|^n}{q^n}\left(K_nq^{-\left(\frac{n}{2}-1\right)\delta}+2nq^{-\frac{2}{n-1}}+c_nq^{-\frac{d-3}{2}-\varepsilon}\right)
\end{align*}
where $K_4=48$, and 
\begin{align*}
K_n&=36+80\cdot 6^{\left\lfloor\frac{n}{2}\right\rfloor-2}+12\left\lfloor\frac{n}{2}\right\rfloor \ \ \text{for} \ n\geq 5 \\
c_n&=(n-1)^{n-3}\cdot 2^{\binom{n-1}{2}-n+3} \\
\varepsilon&=\left\{\begin{array}{ll}
1-\frac{k-2}{k-1}+\delta & : n=2k \\
1-\frac{2k-3}{2k-1}+\delta & :n=2k+1
\end{array}
\right.
\end{align*}
\end{theorem}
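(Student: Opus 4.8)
The plan is to pass from the total $n$-cycle count $C_n$ to the non-degenerate count $C_n^*$ by subtracting off the \emph{degenerate} cycles — those tuples $(x_1,\dots,x_n)\in E^n$ realizing the prescribed cyclic distances but with $x_i=x_j$ for some $i\ne j$ — following the argument of \cite{cycles}. The only change is that the analytic inputs \Cref{distancesTheoremCombined}, \Cref{cyclesMain}, \Cref{cyclesMain2} and \Cref{tree counting} are now available for an arbitrary distance-type function $\phi$, so the estimates transcribe verbatim.

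\textbf{Step 1: the total count.} First I would observe that the lower bounds on $|E|$ in the hypothesis are exactly those of \Cref{cyclesMain2} when $n\ge 5$, and when $n=4$ they force each of the three error terms in the $C_4$ bound of \Cref{cyclesMain} to be at most $q^{-\delta}=q^{-(\frac n2-1)\delta}$ (using $d\ge 2$ and $\delta<\tfrac18$), so that in all cases
\[
\Big|C_n-\frac{|E|^n}{q^n}\Big|\;\le\;K_n\,q^{-(\frac n2-1)\delta}\,\frac{|E|^n}{q^n}.
\]

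\textbf{Step 2: enumerating and bounding degeneracies.} Writing $C_n-C_n^*=\sum_P N_P$ over the nontrivial partitions $P$ of the cycle's vertex set, with $N_P$ the number of embeddings inducing exactly the coincidence pattern $P$, it suffices to bound $\sum_P N_P$. I would group the $P$'s by the isomorphism type of the quotient multigraph: identifying two cyclically non-adjacent vertices yields a figure-eight (two cycles $C_a,C_b$, $a+b=n+1$, glued at a point), identifying adjacent vertices would require a loop carrying a nonzero length $\phi(x,x)$ and so contributes nothing when $\phi(x,y)=Q(x-y)$ and only a negligible amount otherwise, and heavier identifications leave graphs that are combinatorially a bounded number of cycles and trees sharing vertices. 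For each such quotient I would count its injective embeddings into $E$ using the tools already established: \Cref{cyclesMain}/\Cref{cyclesMain2} for the cyclic pieces and \Cref{tree counting} for the tree pieces, passing to the refined set $E^*$ of \Cref{tree counting} when needed. The mildest degeneracy — a single non-adjacent identification — is controlled through the estimate $|E\setminus E^*|\le 2q^{-2\varepsilon/(r+1)}|E|$ and contributes the term $2n\,q^{-2/(n-1)}\tfrac{|E|^n}{q^n}$, while all heavier degeneracies save a factor of order $q^{-\frac{d-3}{2}-\varepsilon}$, with $\varepsilon$ exactly the exponent appearing in the size threshold. Summing over patterns, the number of quotient graphs on at most $n-1$ vertices is at most $2^{\binom{n-1}{2}}$ and the tree-shaped ones among them number at most $(n-1)^{n-3}$ by Cayley's formula; these bookkeeping bounds assemble into the constant $c_n=(n-1)^{n-3}2^{\binom{n-1}{2}-n+3}$, yielding
\[
|C_n-C_n^*|\;\le\;\Big(2n\,q^{-\frac{2}{n-1}}+c_n\,q^{-\frac{d-3}{2}-\varepsilon}\Big)\frac{|E|^n}{q^n}.
\]

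\textbf{Step 3 and the main obstacle.} Adding the estimates of Steps 1 and 2 via the triangle inequality gives the theorem. The delicate part is the bookkeeping in Step 2: one must bound every quotient-graph embedding count by a clean power of $q$ \emph{uniformly} over all degeneration patterns, which is precisely why ``$q$ sufficiently large'' and the stated size thresholds are imposed — they ensure that each error term produced by \Cref{cyclesMain}, \Cref{cyclesMain2} and \Cref{tree counting} is dominated by its main term — and then verify that summing the per-pattern savings reproduces exactly the constants $K_n$, $c_n$ and the exponents $\varepsilon$, $-2/(n-1)$ in the statement. No genuinely new idea beyond \cite{cycles} is needed; the content is that \Cref{distancesTheoremCombined} (and therefore \Cref{cyclesMain}, \Cref{cyclesMain2}, \Cref{tree counting}) remains valid with $\|x-y\|$ replaced by any distance-type $\phi$.
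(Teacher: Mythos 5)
Your proposal matches the paper's proof, which likewise obtains \Cref{nondegenerateCycles} by combining the generalized cycle counts (\Cref{cyclesMain}, \Cref{cyclesMain2}), the generalized tree estimates of \Cref{tree counting}, and the degeneracy-handling bookkeeping of Section 4 of \cite{cycles}, the whole point being that once \Cref{distancesTheoremCombined} and \Cref{thm:two_edge_functional} hold for a general distance type function $\phi$, that argument transcribes verbatim. Your additional care about adjacent identifications (where $\phi(x,x)$ need not vanish for a bilinear form) is a detail the paper leaves implicit, but it does not change the approach.
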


To prove \Cref{cyclesMain} and \Cref{cyclesMain2}, it suffices to prove the following Two-Edge Functional Distance Theorem, which is analogous to \cite[Theorem 3.1 \& 3.2]{cycles}. The remaining arguments are exactly follow those in the original paper. To further prove the non-degenerate counts in \Cref{nondegenerateCycles}, simply make use of the results on trees proved in the previous section and apply the same degeneracy handling strategy as in Section 4 of \cite{cycles}.

\begin{theorem}\label{thm:two_edge_functional}
For nonnegative functions $f,g:\mathbb{F}_q^d\times\mathbb{F}_q^d\to \mathbb{R}$, let $F(x)=\sum_y{f(x,y)}$, $G(z)=\sum_w{g(z,w)}$, $F'(y)=\sum_x{f(x,y)}$, $G'(w)=\sum_z{g(z,w)}$.  Then for nonzero $t\in \mathbb{F}_q$,
\begin{align*}
&\left|\sum_{\phi(x,z) = \phi(w,y) = t}{f(x,y)g(z,w)}-q^{-2}||f||_{1}||g||_{1}\right| \\
& \ \ \ \ \ \ \leq 3q^{-\frac{d+2}{2}}||f||_1||g||_1 + 4q^{d-1}||f||_{2}||g||_{2}+
4q^{\frac{d-3}{2}}\left(||F||_{2}||G||_{2}+||F'||_{2}||G'||_{2}\right)
\end{align*}
In the case $d=2$, one has
\begin{align*}
&\left|\sum_{||x-z||=||w-y||=t}{f(x,y)g(z,w)}-q^{-2}||f||_{1}||g||_{1}\right| \\
& \ \ \ \ \ \ \leq 3q^{-3}||f||_1||g||_1 + 4q||f||_{2}||g||_{2}+
4q^{-\frac{1}{2}}\left(||F||_{2}||G||_{2}+||F'||_{2}||G'||_{2}\right)
\end{align*}
\end{theorem}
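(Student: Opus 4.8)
The plan is to follow the strategy of \cite[Theorem 3.1 \& 3.2]{cycles}, now fed by the general one-edge estimate \Cref{distancesTheoremCombined}, treating the bilinear and quadratic cases separately exactly as in the proof of \Cref{distancesTheoremCombined}. The common starting point is the orthogonality decomposition
\[
\mathbf 1[\phi(x,z)=t] \;=\; \frac1q \;+\; \frac1q\sum_{\ell\in\F_q^*}\chi\big(\ell(\phi(x,z)-t)\big),
\]
applied to each of the two constraints $\phi(x,z)=t$ and $\phi(w,y)=t$. Multiplying the two expansions splits the left-hand side into four pieces according to whether each factor contributes its constant term $q^{-1}$ or its oscillatory term. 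First I would record that the constant$\times$constant piece equals $q^{-2}\|f\|_1\|g\|_1$ \emph{exactly} (nonnegativity is used here only to identify $\sum f$ with $\|f\|_1$), so that it cancels the main term being subtracted.

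Next I would handle the two mixed pieces. In the piece where the $(x,z)$-constraint is oscillatory and the $(w,y)$-constraint is constant, summing $f(x,y)$ over $y$ and $g(z,w)$ over $w$ replaces them by $F(x)$ and $G(z)$, and what remains is precisely $q^{-1}$ times $\big(\sum_{\phi(x,z)=t}F(x)G(z)-q^{-1}\sum_xF(x)\sum_zG(z)\big)$; by \Cref{distancesTheoremCombined} (equivalently \eqref{eq:nondegbilin} in the bilinear case), this is bounded, after renormalization, by a constant times $q^{(d-3)/2}\|F\|_2\|G\|_2$. The symmetric piece gives the same bound with $F',G'$ in place of $F,G$. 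Together these account for the $q^{(d-3)/2}\big(\|F\|_2\|G\|_2+\|F'\|_2\|G'\|_2\big)$ term, the displayed constant $4$ being a comfortable overestimate.

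The heart of the matter is the oscillatory$\times$oscillatory piece
\[
\frac{1}{q^2}\sum_{\ell,\ell'\in\F_q^*}\chi\big(-(\ell+\ell')t\big)\sum_{x,y,z,w}\chi\big(\ell\,\phi(x,z)+\ell'\,\phi(w,y)\big)\,f(x,y)\,g(z,w).
\]
In the quadratic case $\phi(x,z)=Q(x-z)$ I would Fourier-expand $f$ and $g$ in all $2d$ coordinates and evaluate the inner sums over the difference variables $x-z$ and $w-y$ via the Gauss-sum formula of \cite[Theorem 5.33]{lidl1997finite} used in the proof of \Cref{distancesTheoremCombined}; crucially the $\ell$-dependence and the $\ell'$-dependence then separate, so the double sum over $\ell,\ell'$ factors as a product of two one-parameter sums, each of which is (up to harmless unimodular factors) a Kloosterman sum when $d$ is even and a Sali\'e sum when $d$ is odd, hence $O(q^{1/2})$. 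On the generic locus in the frequency variables one applies these $O(q^{1/2})$ bounds followed by Cauchy--Schwarz and Plancherel in the two frequency variables, which produces the term $4q^{d-1}\|f\|_2\|g\|_2$ (the positive power of $q$ being acceptable since it is weighed against the small quantity $\|f\|_2\|g\|_2$); the remaining contributions, coming from the low / isotropic frequencies where these exponential sums degenerate — most notably the zero mode, where the transforms of $f$ and $g$ collapse to $\|f\|_1$ and $\|g\|_1$ — are estimated crudely and collected into the $3q^{-(d+2)/2}\|f\|_1\|g\|_1$ term. In the bilinear case \Cref{lem:orthogonality} plays the role of the Gauss-sum evaluation and the analysis runs in parallel but is cleaner, which is also why the constant $C_\phi$ improves there; the $d=2$ statement follows by rerunning the same computation with the two-dimensional exponents.

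I expect the oscillatory$\times$oscillatory piece to be the only real obstacle. Within it the two delicate points are: verifying that after the Gauss-sum evaluation the parameters $\ell,\ell'$ genuinely decouple, so that one is left with a \emph{product} of one-parameter exponential sums rather than a genuinely two-dimensional sum; and organizing the split of the frequency sum so that every degenerate contribution is routed into the $\|f\|_1\|g\|_1$ term with a negative power of $q$, while the main contribution, carried through Plancherel, lands on $\|f\|_2\|g\|_2$ with the stated $q^{d-1}$. Everything else is bookkeeping that transcribes the argument of \cite{cycles} essentially verbatim once \Cref{distancesTheoremCombined} is available.
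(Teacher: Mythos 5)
Your outline is correct and would produce the stated bounds, but it is packaged differently from the paper. For the quadratic case the paper introduces the sphere $S_t=\{u\in\F_q^d: Q(u)=t\}$, proves $|S_t|=q^{d-1}+\mathcal{E}$ with $|\mathcal{E}|\le q^{d/2}$ and $|\hat{S}_t(m)|\le 2q^{-(d+1)/2}$ for $m\neq 0$ (by exactly the Gauss/Kloosterman/Sali\'e estimates you invoke), and then decomposes the fourfold sum by the spatial frequencies $(m,m')$ of $S_t(x-z)$ and $S_t(w-y)$ as in \cite[Theorem 3.1]{cycles}: the $(0,0)$ mode yields $q^{-2d}|S_t|^2\|f\|_1\|g\|_1$, whose discrepancy from $q^{-2}\|f\|_1\|g\|_1$ is the sole source of the $3q^{-\frac{d+2}{2}}\|f\|_1\|g\|_1$ term; the modes with exactly one frequency zero give the $4q^{\frac{d-3}{2}}\left(\|F\|_2\|G\|_2+\|F'\|_2\|G'\|_2\right)$ terms; and the doubly nonzero modes give $4q^{d-1}\|f\|_2\|g\|_2$ after Cauchy--Schwarz and Plancherel. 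You instead complete the sum in the modulus variables $\ell,\ell'$ first and split by which constraints oscillate; this buys an exact cancellation of the main term and lets the two mixed pieces follow in one line from \Cref{distancesTheoremCombined}, at the cost that the oscillatory--oscillatory piece must still be re-expanded in spatial frequency, where its $(0,0)$ mode contributes at most $q^{-d}\|f\|_1\|g\|_1$ and its singly-zero modes at most $2q^{-1/2}\|F'\|_2\|G'\|_2$ (and symmetrically), both of which fit inside the stated budget for $d\ge 2$. Two refinements to your description: the decoupling of $\ell$ from $\ell'$ that worries you is automatic, since once the spatial frequencies are fixed the $(x,z)$-sum and the $(w,y)$-sum are independent and each produces its own one-parameter Kloosterman or Sali\'e sum; and the isotropic frequencies with $Q'(m)=0$, $m\neq 0$ require no special handling, because $t\neq 0$ guarantees the relevant sums are still $O(q^{1/2})$ there (a Ramanujan sum for $d$ even, a Gauss sum for $d$ odd), so only the genuine zero mode needs to be split off. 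For the bilinear case your plan coincides with the paper's, which simply reruns \cite[Theorem 3.2]{cycles} with \Cref{lem:orthogonality} in place of the dot-product orthogonality.
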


\subsection{Proof of \Cref{thm:two_edge_functional}}
The proof when $\phi$ is a non-degenerate bilinear form follows exactly from the proof of \cite[Theorem 3.2]{cycles}  after replacing dot product there by $\phi$, since all that was used about the dot product was its bilinearity and the orthogonality relation, which we proved in \Cref{lem:orthogonality}.

Now assume that $\phi(x,y)=Q(x-y)$ where $Q$ is a nondegenerate quadratic form on $\F_q^d$. We may assume, without loss of generality, that
\[
Q(x) = x_1^2 + x_2^2 + \cdots + ax_d^2
\]
where $a\in\F_q^*$.
Let
\begin{equation}
    S_t = \{x \in \mathbb{F}_q^d : Q(x) = t\}.
\end{equation}
Then, the following is a direct consequence of \cite[Theorem 7.1]{GroupActions}.

\begin{lemma}\label{sphere bounds nonzero}
When $t\neq 0$,
\begin{align*}
|S_t| &= q^{d-1}+\mathcal{E}, \ \text{with} \\
|\mathcal{E}|&\leq q^{\frac{d}{2}}.
\end{align*}
Moreover, in the case $d=2$,
\begin{align*}
|S_t|&=q\pm 1
\end{align*}
\end{lemma}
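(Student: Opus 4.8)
The plan is to evaluate $|S_t|$ by the standard character--sum method, exploiting that the proof has already normalized $Q$ to the diagonal form $Q(x) = x_1^2 + \cdots + x_{d-1}^2 + ax_d^2$. First I would write, using orthogonality of $\chi$,
\[
|S_t| = q^{-1}\sum_{s\in\F_q}\sum_{x\in\F_q^d}\chi\bigl(s(Q(x)-t)\bigr),
\]
and split off the term $s=0$, which contributes exactly $q^{d-1}$. This identifies the error as
\[
\mathcal{E} = |S_t| - q^{d-1} = q^{-1}\sum_{s\in\F_q^*}\chi(-st)\sum_{x\in\F_q^d}\chi(sQ(x)).
\]
The inner sum factors coordinate by coordinate, and with the one-variable Gauss sum identity $\sum_{x\in\F_q}\chi(bx^2) = \eta(b)G(\chi,\eta)$ (valid for $b\in\F_q^*$) one gets $\sum_{x}\chi(sQ(x)) = \eta(s)^d\eta(a)G(\chi,\eta)^d$, which is exactly the $\xi=0$ case of the evaluation from \cite[Theorem 5.33]{lidl1997finite} already invoked in \Cref{sec:functional}.

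Next I would separate the two parities of $d$. When $d$ is even, $\eta(s)^d\equiv 1$, so $\sum_{s\in\F_q^*}\chi(-st) = -1$ (because $t\ne 0$), giving $\mathcal{E} = -q^{-1}\eta(a)G(\chi,\eta)^d$; since $|G(\chi,\eta)| = q^{1/2}$ this is $|\mathcal{E}| = q^{d/2-1}\le q^{d/2}$. When $d$ is odd, $\eta(s)^d = \eta(s)$, and the substitution $u=-st$ turns the $s$-sum into $\eta(-1)\eta(t)G(\chi,\eta)$, so $\mathcal{E} = q^{-1}\eta(a)\eta(-1)\eta(t)G(\chi,\eta)^{d+1}$ and $|\mathcal{E}| = q^{(d-1)/2}\le q^{d/2}$. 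Either way the claimed bound holds, with room to spare. For the refinement when $d=2$, I would just substitute $G(\chi,\eta)^2 = \eta(-1)q$ into the even case to get $\mathcal{E} = -\eta(a)\eta(-1) = \pm 1$, hence $|S_t| = q\pm 1$.

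There is no genuine obstacle here: the whole argument is a routine Gauss-sum computation, essentially the content of \cite[Theorem 6.26 \& 6.27]{lidl1997finite} or \cite[Theorem 7.1]{GroupActions}, and one could legitimately just cite one of those and read off the inequality. The only points requiring care are the bookkeeping of the quadratic-character factors $\eta(a),\eta(-1),\eta(t)$ and the exact power of $G(\chi,\eta)$ that appears, which determines whether the exponent in $|\mathcal{E}|$ is $d/2-1$ or $(d-1)/2$; in both cases this is comfortably within the stated $q^{d/2}$.
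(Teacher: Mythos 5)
Your computation is correct: the orthogonality step, the factorization into one-variable Gauss sums via $\sum_x\chi(bx^2)=\eta(b)G(\chi,\eta)$, the parity split giving $|\mathcal{E}|=q^{d/2-1}$ for even $d$ and $q^{(d-1)/2}$ for odd $d$, and the $d=2$ refinement via $G(\chi,\eta)^2=\eta(-1)q$ are all sound, and your bounds are in fact sharper than the stated $q^{d/2}$. The paper offers no argument of its own here --- it simply quotes the lemma as a direct consequence of \cite[Theorem 7.1]{GroupActions} --- and your Gauss-sum calculation is exactly the standard proof underlying that cited result, so the approaches coincide.
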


We now show the following, which is shown in \cite{IR07} in the case $Q = \| \cdot \|$.

\begin{lemma}
 If $t\neq 0$ and $m\neq 0$, then
\begin{align*}
|\hat{S}_t(m)|\leq 2q^{-\frac{d+1}{2}}
\end{align*}
\end{lemma}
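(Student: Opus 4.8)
The plan is to compute the Fourier transform $\hat{S}_t(m)$ explicitly using the character-sum expansion of the indicator function of the sphere $S_t$, reducing everything to a Gauss-sum evaluation. First I would write
\[
\hat{S}_t(m) = q^{-d}\sum_{x \in \F_q^d} S_t(x)\chi(-x\cdot m) = q^{-d}\sum_{x}\Big(q^{-1}\sum_{\ell \in \F_q}\chi(\ell(Q(x)-t))\Big)\chi(-x\cdot m),
\]
using the orthogonality of $\chi$ to detect $Q(x)=t$. The $\ell = 0$ term contributes $q^{-d-1}\sum_x \chi(-x\cdot m)$, which vanishes since $m \neq 0$. For $\ell \neq 0$, I would pull out $\chi(-\ell t)$ and recognize the inner sum $\sum_x \chi(\ell Q(x) - x\cdot m)$ as a classical Gauss-type sum; by \cite[Theorem 5.33]{lidl1997finite} (exactly the evaluation already invoked in the proof of \Cref{distancesTheoremCombined}), this equals $q^{-d}\cdot$ wait — more precisely it equals $\chi(-Q'(m)/(4\ell))$ times a product of $d$ Gauss sums $G(\chi,\eta)$ and a factor $\eta(\ell)^{d-1}\eta(a\ell)$, where $Q'$ is the dual form. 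This leaves
\[
\hat{S}_t(m) = q^{-d-1}G(\chi,\eta)^d \sum_{\ell \in \F_q^*} \chi\Big(-\ell t - \frac{Q'(m)}{4\ell}\Big)\eta(\ell)^{d-1}\eta(a\ell).
\]

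Next I would absorb $\eta(a\ell) = \pm\eta(\ell)$ into the power of $\eta$, so that the $\ell$-sum becomes $\sum_{\ell\neq 0}\chi(-\ell t - Q'(m)/(4\ell))\eta(\ell)^d$, which is (up to the known sign) a Kloosterman sum when $d$ is even and a Salié sum when $d$ is odd, with parameters $a = -t \neq 0$ and $b = -Q'(m)/4$ — precisely the same sums identified in the proof of \Cref{distancesTheoremCombined}. Using $|G(\chi,\eta)| = q^{1/2}$, the prefactor $q^{-d-1}|G(\chi,\eta)|^d = q^{-d-1}\cdot q^{d/2} = q^{-d/2-1}$, and using the standard bound $|\text{Kloosterman}|, |\text{Salié}| \leq 2q^{1/2}$ (the bound cited in \cite[Section 3]{IosevichParshall}), I get $|\hat{S}_t(m)| \leq q^{-d/2-1}\cdot 2q^{1/2} = 2q^{-(d+1)/2}$, as claimed. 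This is the same chain of estimates already carried out verbatim in the quadratic-form case of \Cref{distancesTheoremCombined}, just with $f = g$ replaced by (essentially) a single point.

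I do not anticipate a serious obstacle here; the computation is a direct specialization of the Fourier-analytic identity already established for \Cref{distancesTheoremCombined}. The one point requiring mild care is the bookkeeping of the dual quadratic form $Q'$ and the constant $a$ when $Q$ is written in the normalized diagonal form $x_1^2 + \cdots + x_{d-1}^2 + a x_d^2$: one must check that the factor $\eta(\ell)^{d-1}\eta(a\ell)$ collapses correctly to $\eta(\ell)^d$ up to a sign that is independent of $\ell$, so that the $\ell$-sum is genuinely a Kloosterman or Salié sum and the uniform bound applies regardless of whether $a$ is a square. Since the bound being claimed, $2q^{-(d+1)/2}$, already includes the factor of $2$ coming from the Kloosterman/Salié estimate, this sign issue does not affect the final inequality, so the proof should go through cleanly.
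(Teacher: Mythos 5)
Your proposal is correct and follows essentially the same route as the paper's proof: expand the sphere indicator via orthogonality of $\chi$, note the $\ell=0$ term vanishes since $m\neq 0$, evaluate the inner sum by \cite[Theorem 5.33]{lidl1997finite}, and bound the resulting Kloosterman/Sali\'e sum by $2q^{1/2}$ after extracting $|G(\chi,\eta)|^d = q^{d/2}$. The bookkeeping point you flag about $\eta(\ell)^{d-1}\eta(a\ell)$ collapsing to $\pm\eta(\ell)^d$ is handled identically in the paper.
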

\begin{proof}
Let $Q'(x) = x_1^2 + x_2^2 + \cdots + a^{-1}x_d^2$. Then,
\begin{align}
    \hat{S}_t(m) &= q^{-d}\sum_{x \in \mathbb{F}_q^d: Q(x) = t} \chi(x \cdot m) \nonumber\\
    &= q^{-d-1}\sum_{x \in \mathbb{F}_q^d} \sum_{j \in \mathbb{F}_q} \chi(j (Q(x) - t))\chi(x \cdot m) \nonumber\\
    &= q^{-d-1}\sum_{j \in \mathbb{F}_q^*} \chi(-jt) \sum_{x \in \mathbb{F}_q^d} \chi(j Q(x) + x \cdot m) \nonumber\\
    &= q^{-d-1}\sum_{j \in \mathbb{F}_q^*} \chi(-jt) G(\chi,\eta)^d \chi\Big(-\frac{Q'(m)}{4j}\Big) \eta(j)^{d-1}\eta(aj) \label{quadratic_form_gauss_sum} \\
    &= q^{-\frac{d}{2}-1} \left| \sum_{j \in \mathbb{F}_q^*} \chi\Big(-jt - \frac{Q'(m)}{4j}\Big)\eta(j)^{d} \right| \label{quadratic_form_guass_sum_estimate} \\
    &= 2q^{-\frac{d+1}{2}}. \label{quadratic_form_kloosterman_salie_sum}
\end{align}
We have that \eqref{quadratic_form_gauss_sum} follows from \cite[Theorem 5.33]{lidl1997finite}, \eqref{quadratic_form_guass_sum_estimate} follows from the Gauss sum estimate $|G(\chi, \eta)| = q^{1/2}$, and \eqref{quadratic_form_kloosterman_salie_sum} follows from the Sali\'{e} and Kloosterman sum bounds as in the proof of \Cref{distancesTheoremCombined}.
\end{proof}

\begin{corollary}
\begin{align*}
|S_t|\leq 2q^{d-1}, \ \text{and}
\end{align*}
\begin{align*}
|S_t|^2&=q^{2d-2}+\mathcal{E}', \text{with} \\
|\mathcal{E}'|& = |2q^{d-1}\mathcal{E}+\mathcal{E}^2| \leq 3q^{\frac{3d-2}{2}}
\end{align*}
In the case $d=2$, 
\begin{align*}
|\mathcal{E}'|\leq 3q
\end{align*}
\end{corollary}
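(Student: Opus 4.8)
The plan is to read off all three estimates directly from \Cref{sphere bounds nonzero}. Write $|S_t| = q^{d-1} + \mathcal{E}$ with $|\mathcal{E}| \le q^{d/2}$ in general, and with the sharper bound $|\mathcal{E}| \le 1$ when $d = 2$. Everything then follows from the triangle inequality together with the elementary comparisons of powers of $q$ that hold for $d \ge 2$; in particular, the Fourier bound on $\hat{S}_t(m)$ proved just above is not needed for this statement.

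For the first inequality, I would bound $|S_t| \le q^{d-1} + q^{d/2} \le 2q^{d-1}$, using $q^{d/2} \le q^{d-1}$, which is equivalent to $d \ge 2$. (When $d = 2$ one may instead argue directly from $|S_t| = q \pm 1 \le q + 1 \le 2q$.)

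For the second, expand
\[
|S_t|^2 = (q^{d-1} + \mathcal{E})^2 = q^{2d-2} + \bigl(2q^{d-1}\mathcal{E} + \mathcal{E}^2\bigr),
\]
which gives the claimed identity $\mathcal{E}' = 2q^{d-1}\mathcal{E} + \mathcal{E}^2$. Then
\[
|\mathcal{E}'| \le 2q^{d-1}|\mathcal{E}| + |\mathcal{E}|^2 \le 2q^{d-1} q^{d/2} + q^{d} = 2q^{\frac{3d-2}{2}} + q^{d} \le 3q^{\frac{3d-2}{2}},
\]
where the final step uses $q^{d} \le q^{\frac{3d-2}{2}}$, again equivalent to $d \ge 2$. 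In the case $d = 2$, substituting $|\mathcal{E}| \le 1$ gives $|\mathcal{E}'| = |2q\mathcal{E} + \mathcal{E}^2| \le 2q + 1 \le 3q$.

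I do not anticipate any genuine obstacle: the corollary is an immediate consequence of \Cref{sphere bounds nonzero} and the triangle inequality. The only point requiring a moment's care is checking that both exponent comparisons invoked, namely $d/2 \le d-1$ and $d \le \frac{3d-2}{2}$, reduce to $d \ge 2$, which holds by hypothesis; in particular no lower bound on $q$ is needed.
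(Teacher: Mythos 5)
Your proposal is correct and is exactly the intended derivation: the paper states this corollary without proof as an immediate consequence of \Cref{sphere bounds nonzero}, and your expansion of $(q^{d-1}+\mathcal{E})^2$ together with the exponent comparisons $d/2 \le d-1$ and $d \le \frac{3d-2}{2}$ (both equivalent to $d \ge 2$) fills in precisely the omitted routine details.
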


Using the results above, we can proceed as in the proofs of \cite[Theorem 3.1 \& Lemma 3.7]{cycles} to conclude the proof of \Cref{thm:two_edge_functional}.

\providecommand{\bysame}{\leavevmode\hbox to3em{\hrulefill}\thinspace}
\providecommand{\MR}{\relax\ifhmode\unskip\space\fi MR }
\providecommand{\MRhref}[2]{%
  \href{http://www.ams.org/mathscinet-getitem?mr=#1}{#2}
}
\providecommand{\href}[2]{#2}


\begin{thebibliography}{9999999}

\bibitem[BCCHIP]{LongPaths}
Michael Bennett, Jeremy Chapman, David Covert, Derrick Hart, Alex Iosevich, Jonathan Pakianathan, \emph{Long paths in the distance graph over large subsets of vector spaces over finite fields}, J. Korean Math. Soc., \textbf{53} (2016), no. 1, 115--126. ISSN 0304-9914, 2234-3008. MRCLASS 52C10 (05C12 05C90 11T23). MRNUMBER 3450941. DOI \href{https://doi.org/10.4134/JKMS.2016.53.1.115}{10.4134/JKMS.2016.53.1.115}.

\bibitem[BHIPR]{GroupActions}
M. Bennett, D. Hart, A. Iosevich, J. Pakianathan, M. Rudnev, \emph{Group actions and geometric combinatorics in ${\mathbb F}_q^d$}, (2013), arXiv:1311.4788 [math.CO].

\bibitem[CHIKR]{covert2008generalized}
David Covert, Derrick Hart, Alex Iosevich, Doowon Koh, Misha Rudnev, \emph{Generalized incidence theorems, homogeneous forms, and sum-product estimates in finite fields}, (2008), arXiv:0801.0728 [math.CO].

\bibitem[IJM]{cycles}
Alex Iosevich, Gail Jardine, Brian McDonald, \emph{Cycles of arbitrary length in distance graphs on {$\mathbb{F}_q^d$}}, Tr. Mat. Inst. Steklova, \textbf{314} (2021), 31--48. ISSN 0371-9685. MRCLASS 05C12 (11T24). MRNUMBER 4324083. DOI \href{https://doi.org/10.4213/tm4189}{10.4213/tm4189}.

\bibitem[IP]{IosevichParshall}
Alex Iosevich and Hans Parshall, \emph{Embedding distance graphs in finite field vector spaces}, J. Korean Math. Soc., \textbf{56} (2019), no. 6, 1515--1528. ISSN 0304-9914, 2234-3008. MRCLASS 52C10 (05C62 11T23). MRNUMBER 4015984. MRREVIEWER Iskander Aliev. DOI \href{https://doi.org/10.4134/JKMS.j180776}{10.4134/JKMS.j180776}. URL \url{https://doi.org/10.4134/JKMS.j180776}.

\bibitem[IR]{IR07}
A. Iosevich, M. Rudnev, \emph{Erdős distance problem in vector spaces over finite fields}, Trans. Amer. Math. Soc., \textbf{359} (2007), no. 12, 6127--6142.

\bibitem[LN]{lidl1997finite}
Rudolf Lidl, Harald Niederreiter, \emph{Finite Fields}, vol. 20, pt. 1, EBL-Schweitzer, Cambridge University Press, 1997. ISBN 978-0521392310. MR 1429394.

\bibitem[S]{serre-course-prop5}
Jean-Pierre Serre, \emph{A Course in Arithmetic}, in \emph{A Course in Arithmetic}, vol. 7, Graduate Texts in Mathematics, Springer-Verlag, New York, 1973, edited by Michael Artin and John Tate, chapter IV, section 1, proposition 5.















\end{thebibliography}
\end{document}